




\documentclass[sn-mathphys]{sn-jnl}

\usepackage{graphicx}
\usepackage{xcolor}
\usepackage{ulem}
\usepackage{amsmath,amsthm}


\usepackage{amsmath}
\usepackage{amssymb,bm}
\usepackage{amsfonts}
\usepackage{graphicx}
\usepackage{afterpage}
\usepackage{flafter}
\usepackage{marvosym}
\usepackage{multirow}
\usepackage{dcolumn}
\usepackage{lipsum}
\usepackage{epstopdf}
\usepackage{algorithm}
\usepackage{color}

\usepackage{cleveref}
\newtheorem{thm}{Theorem}
\newtheorem{prop}[thm]{Proposition}

\newtheorem{cor}[thm]{Corollary}
\newtheorem{rem}[thm]{Remark}

\newcommand{\eps}{\varepsilon}
\newcommand{\N}{\ensuremath{\mathbb{N}}}

\begin{document}

\title[Kicked FitzHugh-Nagumo chains]{Periodically kicked feedforward chains of\\ simple excitable FitzHugh-Nagumo neurons }

\author*[1]{\fnm{Benjamin} \sur{Ambrosio}}\email{benjamin.ambrosio@univ-lehavre.fr}

\author[2]{\fnm{Stanislav~M.} \sur{Mintchev}}\email{stanislav.mintchev@cooper.edu}
\equalcont{These authors contributed equally to this work.}

\affil*[1]{\orgdiv{Department of Mathemathics}, \orgname{Normandie Univ, UNIHAVRE, LMAH,  FR CNRS 3335, ISCN}, \orgaddress{\street{25 rue Philippe Lebon}, \city{Le Havre}, \postcode{76600}, \state{Normandie}, \country{France}}}

\affil[2]{\orgdiv{Department of Mathematics}, \orgname{The Cooper Union for the Advancement of Science and Art}, \orgaddress{\street{41 Cooper Sq.}, \city{New York}, \postcode{10003}, \state{NY}, \country{USA}}}






\abstract{
This article communicates results on regular depolarization cascades in periodically-kicked feedforward chains of excitable two-dimensional FitzHugh-Nagumo systems driven by sufficiently strong excitatory forcing at the front node. The study documents a parameter exploration by way of changes to the forcing period, upon which the dynamics undergoes a transition from simple depolarization to more complex behavior, including the emergence of mixed-mode oscillations. Both rigorous studies and careful numerical observations are presented. In particular, we provide rigorous proofs for existence and stability of periodic traveling waves of depolarization, as well as the existence and propagation of a simple mixed-mode oscillation that features depolarization and refraction in alternating fashion. Detailed numerical investigation reveals a mechanism for the emergence of complex mixed-mode oscillations featuring a potentially high number of large amplitude voltage spikes interspersed by an occasional small amplitude reset that fails to cross threshold. Further careful numerical investigation provides insights into the propagation of this complex phenomenology in the downstream, where we see an effective filtration property of the network; the latter amounts to a successive reduction in the complexity of mixed-mode oscillations down the chain.}

\keywords{bifurcations, neural excitability, signal propagation, neural rhythms, FitzHugh-Nagumo model, mixed-mode oscillations, traveling waves}

\maketitle

\section{Introduction}
In mathematical neuroscience, the study of nerve depolarization goes back to the original Hodgkin-Huxley equations for electrical propagation in the squid giant axon~\cite{Hod-1952}. That model features a nonlinear reaction-diffusion system that accounts for the ionic fluxes across the nerve cell membrane as well as the cell axon's properties as an electrical cable. A variety of models have since been derived from the Hodgkin-Huxley equations. Among them, the reduction proposed by FitzHugh and Nagumo~\cite{Fit-1961,Nag-1962} combines the effects of several  auxiliary ion channels into a single recovery variable and thus leads to a simpler dynamical system that is more amenable to mathematical analysis; the resulting PDE has been analyzed extensively both theoretically and in applied context~\cite{Fen-2008,Has-1975,Has-1976,Mck-1970,Jon-1984,Kop-1991,Rau-1978,Rin-1983}.

Some recent studies have focused on a (space-)non-homogeneous reaction-diffusion version of the FitzHugh-Nagumo model~\cite{Amb-2017,Amb-2009}; this work was done with particular emphasis on the interplay between the oscillatory and excitatory properties of the traditional two-dimensional ODE system. This interplay provides a setting wherein localized oscillations (realized by tuning a specific part of the medium to the oscillatory regime) diffuse into the surrounding excitable medium. The studies examined a class of Neumann boundary condition problems for the PDE
\begin{align}\label{E:benjamin-prior_work}
\begin{cases}
\eps u_t = f(u) - v + d \, u_{xx},\\
v_t = u - c.
\end{cases}\tag{RD FHN}
\end{align}
A crucial feature of this model was the incorporation of a spacial non-homogeneity $c\,(=c(x))$ in the equation of the recovery variable $v$. This feature afforded a mechanism for the generation of a periodic signal in some small region of the spacial domain, which may in turn propagate or fail to do so depending on the excitability properties (which vary according to $c(x)$) of neighboring regions. From a mathematical perspective, the success or failure of the propagation of oscillations corresponds to an interesting bifurcation phenomenon in this infinite-dimensional setting.  Similar properties have been exploited in a more applied context~\cite{Maia-2013} where the authors have used a FitzHugh-Nagumo reaction-diffusion system with a non-homogeneous Laplacian term to model axon damage resulting from traumatic brain injury.

\begin{figure}[t!]
\begin{center}
\includegraphics[scale=0.4]{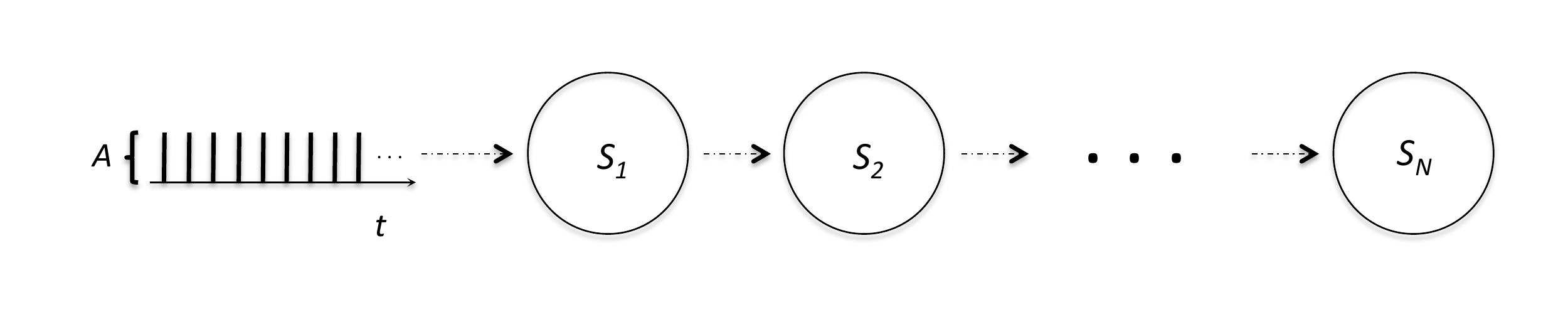}
\caption{Network diagram. A periodic train of Dirac stimuli is injected into $S_1$; for $j \geq 2$ unidirectional coupling provides communication between $S_{j-1}$ and $S_j$ via an instantaneous kick to the recovery variable of the receiving cell, when its left neighbor's voltage crosses the depolarization threshold.}
\label{fig:kicked_ff_FHN_chain-mod}
\end{center}
\end{figure}
The present article provides and analyzes a related simple model for wave propagation phenomena in spatially-discrete unidirectional chains of excitable nerve cells. Each cell in the chain has voltage dynamics modeled by a two-dimensional FitzHugh-Nagumo (FHN) ordinary differential equation. Continuous dependence in space is replaced by a simple coupling scheme through instantaneous kicks to the recovery variable. More precisely, with reference to \cref{fig:kicked_ff_FHN_chain-mod} we consider a feedforward chain of FitzHugh-Nagumo neurons $\{ S_j \}_{j \geq 1}$ modeled by
\begin{align}
&S_1: \qquad
\begin{cases}
\eps u_t = f(u) - v,\\ \notag
v_t = u - c - A \sum_{i=0}^{\infty} \delta(t-t_i);
\end{cases}\\ \tag{FHN FFC}\label{E:main_model}\\
&S_j: \qquad
\begin{cases}
\eps u_t = f(u) - v,\\ \notag
v_t = u - c - A \,  \delta(u_{j-1}-K) \cdot \mathbf{1}_{\{ v_{j-1} < 0 \}},
\end{cases} \qquad (j>1)
\end{align}
where $t_i=\alpha i$ for some fixed period $\alpha$, and $\eps$ is small but positive. The $t$-subscripts denote ordinary differentiation with respect to time. This chain is spatially homogeneous, in the sense that the function $f$ as well as the parameters $\eps$, $c$, $A$, and $K$ are all $j$-independent. In the second set of equations, we employ the convention $\infty \cdot 0 = 0$. As in the standard FHN model, the function $f$ is assumed to have a graph resembling the general shape of a negative-leading-coefficient cubic; for explicit calculations and numerical simulations, we assume $f(u)=3u-u^3$. The parameter $A$ represents the size of Dirac stimuli presented in periodic fashion to the chain's front node $S_1$, or else arriving at $S_j$ for $j>1$ when the voltage variable $u_{j-1}$ of the preceding cell $S_{j-1}$ crosses (on upswing) a certain threshold value $K$ (we also take $K=0$ for numerical simulations); in either case, these stimuli are incorporated into the dynamics through the recovery variable $v_j$.

A detailed investigation of \cref{E:main_model} is motivated in part by the necessity for a discrete-space alternative to~\cref{E:benjamin-prior_work} that features comparable phenomenology by way of significantly simpler dynamical mechanisms. The most natural analog of~\cref{E:benjamin-prior_work} studied in~\cite{Amb-2017,Amb-2009} involves a discretized Laplacian term in the right-hand side of the $u$ equation as well as a dependence of $c$ on the index / location of the cell within the network. This correspondence yields a coupled map lattice whose complexity is at least as high as that of the original partial differential equation (see~\cite{Chaz-2005}, esp. relevant chapters~\cite{Afra-2005,Bae-2005,Flo-2005}); accordingly, an analysis of the corresponding coupled map lattice would be just as much, if not more complicated than an analysis of the original reaction-diffusion model. While it is typical for neural coupling to enter the cell dynamics directly through the voltage variable~\cite{Erm-2010, Izh-2007}, \cref{E:main_model} features the well-studied dynamics of the isolated FitzHugh-Nagumo system as perturbed only occasionally by instantaneous kicks delivered to the recovery variable. The ensuing dynamical system is readily amenable to a mathematical treatment -- we will discuss both rigorous results and detailed numerical work, all of which point to a variety of interesting and biophysically relevant bifurcation phenomena that can be established analytically. This simplification notwithstanding, the choice of coupling for our model is still appropriate since it presents a second-order cumulative effect to the voltage while serving to emphasize the threshold barrier that is a common feature of depolarization in the presence of excitability. (Specifically, for the example of the cubic nullcline featured in the FitzHugh-Nagumo system, the recovery variable's clearance of the local minimal point effectively induces a rapid jump in the voltage variable -- this serves precisely the purpose of guaranteeing a mechanism for the generation of action potentials via excitation.) We note also that this simple coupling formalism is related to common modeling paradigms of collective network behavior in mathematical neuroscience. For instance, forcing the network's front node by an external input current, together with Dirac coupling between cells through the recovery variable is commensurate with, e.g., the usual integrate-and-fire coupling scheme where communication between cells is achieved by discrete kicks/jumps in the synaptic current variable prior to being incorporated into the voltage equation. Thus, the proposed model~\cref{E:main_model} features a simple coupling scheme analogous to those commonly employed in the formulation of spiking networks~\cite{Char-2016,Char-2015,Ger-2002}.

\subsection{Further notations, definitions, and basic properties of FHN}
\label{subsec:basic_props_FHN}

The isolated FHN model $S_{\bullet}$ with $A\equiv 0$ features a rest state at $(c,f(c))$ and is an example of a {\it slow-fast} system. The $u$ nullcline $v=f(u)$ is also the system's {\it critical manifold}, wherein trajectories are constrained for $\eps = 0$. For $0 < \eps \ll 1$, away from the critical points for the graph of $f$, singular perturbation theory guarantees the existence of a flow-invariant curve $\eps$-close to this nullcline, where trajectories follow the slow timescale; such a curve is referred to as a {\it slow manifold} for the system. In this regime, we have that $u_t$ is positive below the $u$ nullcline and negative above it; accordingly, the flow above a slow manifold (say, above and sufficiently separated from $v=f(u)$) is expected to move to the left (negative $u$ direction) according to the fast timescale, and one has precisely the oppositely-directed fast motion below.

We focus on the parameter regime $c<-1$, for which it is well-known that the system is {\it excitable}~\cite{Izh-2007,Kee-2009}. Starting from a certain point $(c,v_0)$, with $v_0$ sufficiently smaller than $f(c)$, one can clearly construct a conjunction of four singular orbits consisting of 

\medskip
\begin{minipage}[t]{0.7\textwidth}
\begin{enumerate}
\item[$\sigma_1$:] a horizontal (fast) trajectory to the right from the point in question to the point $(u_0,v_0)$ satisfying $f(u_0)=v_0$; 
\item[$\sigma_2$:] a critical trajectory within the curve $v=f(u)$ from $(u_0,v_0)$ to the right critical point of $f$, say $(u_c,v_c)$;
\item[$\sigma_3$:] a horizontal (fast) trajectory to the left from the right critical point of $f$ to the other point in $v=f(u)$ satisfying $v=v_c$;
\item[$\sigma_4$:] a critical trajectory within the curve $v=f(u)$ connecting the last point $(u_l,v_c)$ to the equilibrium $(c,f(c))$.
\end{enumerate}
\end{minipage}

\vspace{0.5cm}

Accordingly, for $\eps \approx 0$, singular perturbation theory~\cite{Feni-1979,Jon-1995,Kru-2001} assures the existence of a orbit $\gamma_{\bullet}(t)$ of the isolated FHN starting from $(c,v_0)$ that is asymptotic to $(c,f(c))$ and $\eps$-close to the concatenation $\sigma_1 \cup \sigma_2 \cup \sigma_3 \cup \sigma_4$ described above. For the purposes of this article, excitability is to be interpreted in the following sense: the system is at rest at $(c,f(c))$, but if presented with a sufficiently large instantaneous momentum transfer in the negative $v$ direction, the system crosses threshold to the right (this constitutes an effective {\it depolarization}), and so resets to the rest state only after this threshold crossing; alternatively, if the kick is not sufficiently large, reset to the rest state happens directly without a depolarization. Strong excitation with discharge is thus possible, but not necessarily achieved.

Of note, the phase portrait for an isolated FHN cell contains a rich geometry that provides a mechanism for {\it mixed-mode oscillations} (MMOs). Recall that MMOs correspond to switching between small amplitude and large amplitude oscillations. 
Such patterns were first discovered in the Belousov-Zhabotinsky reaction~\cite{Bro-1991,Rot-2003} and have since been frequently observed in experiments and models of chemical and biological rhythms, particularly in the context of neuroscience~\cite{Izh-2007,Rot-2008,Gha-2017}. There is a large number of works dealing with MMOs -- both the theoretical and the applied aspects. We refer to \cite{Des-2012} for a general review of various results on MMOs as well as \cite{Wec-2007} for a short summary. A remarkable way to obtain MMOs is through the canard phenomenon found in slow-fast systems. Canard solutions are phase space trajectories that follow a repulsive manifold for a certain amount of time. They were first discovered in the Van der Pol oscillator~\cite{Ben-1981} with non-standard analysis techniques, and this led to numerous subsequent works, for example~\cite{Amb-2018,Dum-1996,Eck-1983,Kru-2001}. In this context, small oscillations may correspond to canards that are repelled to a nearby attractive manifold, while large oscillations arise from canards which are repelled to a distant one. Combined with a return mechanism, this provides a setting for sustained MMOs~\cite{Bro-2006,Des-2012,Kru-2014}. 

The model we investigate also features the emergence of MMOs within an individual cell as well as their propagation through the feedforward chain. To draw a distinction between our case and the examples from the literature referenced above, we note that the first of these phenomena is based on an interplay between the impulsive periodic forcing and a phase space geometry that allows both relaxation oscillations and simple reset to equilibrium. Variations to the period of the forcing can lead to multiple consecutive relaxation oscillations (see \cref{fig:u-v-differentcolors}) that are occasionally followed by a reset to equilibrium. Some of the observed trajectories in this model are related to the canard phenomenon since they seem to follow the repulsive manifold for a non-trivial amount of time. Overall the dynamical mode is characterized by the property that a solution curve 
consists of multiple loops about the equilibrium $(c,f(c))$, some of which are sufficiently large to cross the depolarization threshold at $u=0$, while others remain too small for this. This amounts to an evolution of the voltage variable $u$ that features a combination of small-amplitude flutters and large-amplitude excursions, the latter resulting in the triggering of synaptic firing toward the neighboring cell. Since this complex phenomenology provides a mechanism for skipping some of the beats in a regular depolarization train, it also establishes a way for cells in this network to support a richer spike train encoding structure. We discuss this regime in some detail in \cref{sec:mmo_at_S_1}, and we explore the downstream propagation of MMOs in \cref{sec:propagation_MMO}.

We denote by $\underline{\varphi} (\cdot) = \left( \varphi_j (\cdot) \right)_{j = 1}^N$ the unique solution to the version of \cref{E:main_model} featuring $N$~cells evolving dynamically from a prescribed initial value $\underline{\varphi} (0) = (\varphi_{\cdot}(0)) \in (\mathbb{R}^2)^N$. We will be particularly interested in the initial prescription $\varphi_j(0)=(c,f(c))$ for all $j$, corresponding to the scenario where the chain is initially kicked from its rest state. In accordance with standard terminology, a solution $\underline{\varphi}$ to such an initial value problem is said to be {\it periodic of period $\tau$} if
\[
\underline{\varphi}(t + \tau) = \underline{\varphi}(t),
\]
for all $t \geq 0$. We refer to $\underline{\varphi}$ as a {\it traveling wave solution (TW)} provided that there exist a function $\psi : [0,\infty) \to \\mathbb{R}^2$ and a $j$-independent time shift $\beta \in \mathbb{R}$, such that
\[
(\underline{\varphi}(t))_j = \varphi_j (t) = \psi (t - j \,  \beta), \qquad j \in {1,\ldots,N},
\]
for all $t \geq 0$. We note that, in the context of the treatment presented herein, a periodic TW in \cref{E:main_model} corresponds to the presence of rhythmic and regular cascades of depolarization in the chain. Related phenomenology has been observed in the context of type~I phase oscillator networks~\cite{Fer-2016,Lan-2014} under excitatory coupling, where a variety of rigorous and numerical results point to the existence and stability (both Lyapunov and structural) of traveling wave solutions in homogeneous feedforward chains. In addition to its connections with simple spiking network models, our model provides an excellent opportunity to address questions concerning wave propagation in the context of excitable cells, which differ 
from oscillatory neurons because of the attraction to a resting equilibrium in the absence of stimulus. Studying side-by-side oscillatory and excitatory models may reveal the extent to which the presence of stable regular waves of activity is a stimulus-driven phenomenon. 
If we suppose that cell $S_1$ receives a kick (by $-A$ in the $v$-direction) sufficiently large to induce a simple fast trajectory across threshold resulting in depolarization, the second cell $S_2$ in the chain is also affected by the stimulus. Thus results a simple setting for exploring depolarization propagation in chains of excitable units coupled according to the equations above. This relates to the work \cite{Amb-2017,Amb-2009} where traveling pulses are generated from a central generator and diffuse outwards, according to the FitzHugh-Nagumo reaction-diffusion model~\cref{E:benjamin-prior_work}. As in that prior work, of interest to the present study -- will periodic excitation at $S_1$ propagate down the chain or be blocked? If it propagates, will that propagation be akin to a periodic wave with a well-defined velocity (like an effective diffusion in a homogeneous medium), or can the propagation be more complex? 

\subsection{Plan for the paper}
\label{subsec:plan}

The paper is organized as follows. In \cref{subsec:main_results} we give an overview of our results, with emphasis to distinguish rigorous work from deductions based solely on thorough numerical simulations; these results are summarized in \cref{fig:result_diagram}. In \cref{sec:simple_TW}, we establish a theoretical result guaranteeing the presence of a Lyapunov-stable traveling wave solution with periodicity that matches the forcing period. In \cref{sec:mmo_at_S_1} we provide a rigorous result establishing a mechanism for especially simple mixed-mode oscillations at $S_1$ under periodic forcing with kicks. We also present numerical investigations of the variety of possibilities for more complex mixed-mode oscillations at $S_1$. Then, in \cref{subsec:propagation_simple_mixed}, we relate our investigations of mixed-mode oscillation phenomenology at $S_1$ to downstream propagation, which can be significantly more complicated than the simple scenario established in \cref{sec:simple_TW}. We conclude with a survey of numerical experiments on propagation when dynamics at one or more of the upstream sites is complex and poorly understood; these numerics are described in \cref{subsec:propagation_complex_mixed-numerics} and seem to indicate that complex behavior may be successively averaged / filtered out by the first few nodes of a chain, so as to obtain tame behavior downstream -- such as a traveling wave -- while the first few upstream sites behave in a more complicated manner. \Cref{sec:conclusion} provides some concluding remarks as well as directions for future work.

\section{Main results} \Cref{tab:parameter_values-numerics} lists the model parameter values used for the numerical simulations of~\cref{E:main_model}. The reader should refer to \cref{fig:result_diagram} for a summary of the results detailed below.
\label{subsec:main_results}

\begin{table}[t]
{\footnotesize
\caption{Values for the parameters used in the numerical simulations}\label{tab:parameter_values-numerics}
\begin{center}
\begin{tabular}{|c|r|} \hline
\bf Parameter & \bf Value \\ \hline
$\eps$ & $0.1\phantom{00}$ \\
$c$ & $-1.2\phantom{00}$ \\
$A$ & $1.0\phantom{00}$ \\
$K$ & $0\phantom{.000}$ \\
RK4 timestep & $0.001$ \\ \hline
\end{tabular}

\end{center}
}
\end{table}

\subsection{Low frequencies (rigorous)}
If the frequency of incoming kicks is sufficiently low, $S_1$ is guaranteed to settle to within a small neighborhood of $(c,f(c))$ prior to any subsequent stimulation. For the purposes of this discussion, we treat this as having the forcing period $\alpha$ sufficiently large (say, there is some $\alpha_0>0$ and the preceding remark holds for $\alpha > \alpha_0$). Since we assume the kick size $A$ to be sufficiently large for the vertical offset induced by a kick to clear the neighborhood of the critical manifold where the system can feature complex behavior (specifically, we have in mind avoiding canard trajectories; see, e.g.,~\cite{Dum-1996}), the resulting dynamics at $S_1$ will be time asymptotic to a periodic threshold traversal with period $\alpha$, which guarantees $S_2$ receives a periodic sequence of kicks identical to that presented to $S_1$.

The argument outlined above clearly extends inductively because of the homogeneity in the network. The result is an asymptotic trend toward a solution close to an {\bf $\alpha$-periodic traveling wave (TW)} down the chain, a phenomenon resembling that studied for phase oscillator ensembles in~\cite{Lan-2014} and~\cite{Fer-2016}. Of note, there are some theoretical subtleties to be detailed during proofs.  Nevertheless, the network supports traveling waves of arbitrary (sufficiently high) period, when the stimulus / kick size is sufficiently large in the sense presented above. We establish this rigorously in \cref{sec:simple_TW}.

\begin{figure}
\begin{center}
\includegraphics[scale=0.45]{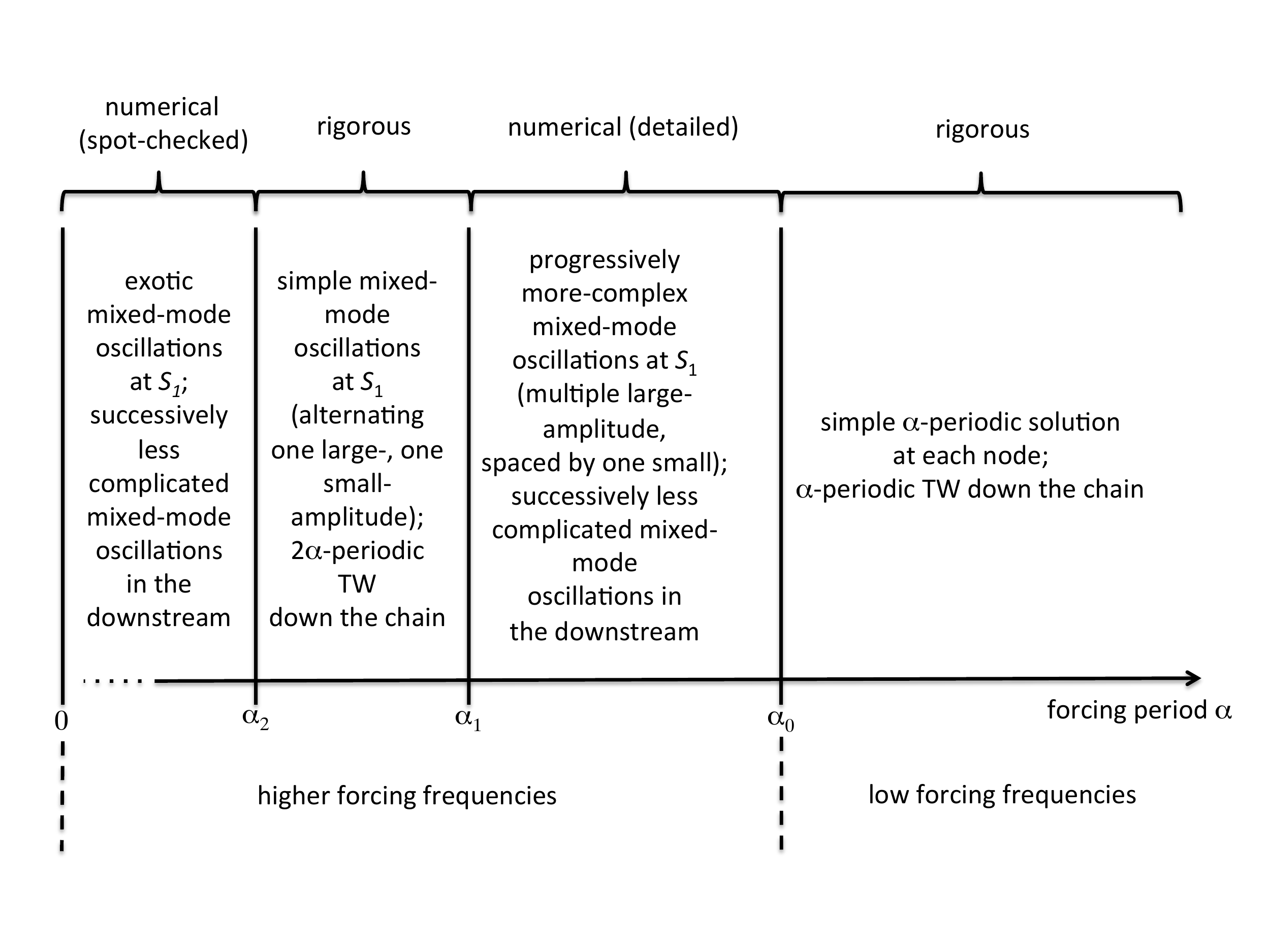}
\caption{A schematic of the results discussed in this paper. Rigorous analysis at $S_1$ for forcing with $\alpha$-periodic Dirac kicks reveals $\alpha$-periodic response (at low frequency, with $\alpha > \alpha_0$) and $2\alpha$-periodic simple mixed-mode oscillation (at higher frequency, with $\alpha \in (\alpha_2,\alpha_1)$); in turn, this analysis extends inductively down the chain, thus establishing the existence of  $\alpha$-periodic TW at low frequency (resp., $2\alpha$-periodic TW for an interval of higher frequencies). A detailed numerical investigation explores a bifurcation cascade as the period $\alpha$ is increased across the threshold $\alpha_1$ and then dialed up to the threshold $\alpha_0$; this analysis reveals successively more complex mixed-mode oscillations at $S_1$, whose complexity decreases as they propagate downstream -- that is, the chain filters out the complexity and decreases the number of large amplitudes before a small amplitude break (when examining the dynamics at $S_j$, as $j$ increases). A few numerical case studies of this filtering phenomenon (at $S_j$, with $j$ low) are also presented for illustration in the very high forcing frequency domain, where $\alpha < \alpha_2$.}
\label{fig:result_diagram}
\end{center}
\end{figure}

\subsection{Higher frequencies (analysis and numerics)}
As the forcing period $\alpha$ is decreased, a complicated interplay between stimulus size and frequency begins to emerge. This interplay provides a setting for very diverse phenomenology that includes a variety of {\bf mixed-mode oscillations (MMOs)} as well as obstruction to depolarization. Thinking of the stimulus/kick size as fixed going forward, exploring the interplay amounts to a parameter study by way of decreasing $\alpha$.\footnote{Of interest for the feedforward chain network studied here, how do MMOs propagate downstream (do they regularize or become more irregular?), and are there regimes in which wave propagation / signal transmission downstream is altogether blocked?} See \cref{fig:u-v-al=8p3,fig:u-v-al=8p4,fig:u-t-al=8p4,fig:u-v-t-al=8p41,fig:u-t-al=8p41,fig:u-v-t-al=8p45,fig:u-t-al=8p45}. 

Setting aside the observation that a full half-open interval $(\alpha_0,\infty)$ of forcing periods features the tame behavior discussed above, we continue the $\alpha$-parameter exploration. For moderately higher frequency, kicks are overwhelmingly likely to arrive while the dynamics is in a slow regime (and yet, before it reaches a small neighborhood of the equilibrium). There are two possibilities -- a kick arrives either on the downswing toward the stable equilibrium $(c,f(c))$, or on the upswing slow branch to the right. Since the kicks always decrease the value of the recovery variable $v$, the former possibility can yield a fast subsequent trajectory that either crosses threshold for depolarization (provided that kick arrives sufficiently late) or wavers for a small amplitude oscillation around $(c,f(c))$, close to the left extremum of the critical manifold (see \cref{fig:u-v-al=8p3,fig:u-v-al=8p4,fig:u-v-t-al=8p41,fig:u-v-t-al=8p45}); on the other hand the latter possibility yields various interesting complications, including a regime where the neuron is trapped in small amplitude oscillations on the right (upswing) slow branch and never resets to the left slow branch, which drives the voltage $u$ to $+\infty$ and prevents it from undergoing future depolarizations.

At some point, provided that $\alpha$ is within an appropriate range, one simple mechanism for MMO at $S_1$ emerges as follows. For simplicity, assume the cell starts at its natural equilibrium $(c,f(c))$. The kick received to start is sufficiently large to induce a fast horizontal trajectory (assume $f(c)-A$ is sufficiently smaller than $f(-1)=-2$) across threshold; thereafter, the subsequent stimulus received at $t=\alpha$ comes when the trajectory is in the refractory phase, but not yet sufficiently close to the equilibrium to guarantee clearing the threshold anew. As a consequence, the subsequent stimulus induces a much smaller oscillation consisting of a direct attraction to the equilibrium instead of a large oscillation. Thereafter, over the next $\alpha$ interval, the cell has sufficient time to reach the aforementioned neighborhood of the equilibrium that is essential to assuring that it clears the firing threshold at the subsequent stimulation; see \cref{fig:u-v-al=8,fig:u-t-al=8,fig:v-t-al=8}. In \cref{sec:mmo_at_S_1} we prove that in this regime, the dynamics of $S_1$ features a  $2\alpha$-periodic solution with this geometry. For definiteness, we define the range of $\alpha$ where these simple MMOs are supported as $(\alpha_2,\alpha_1)$. Accordingly, when $\alpha$ is between $\alpha_1$ and $\alpha_0$, more complex mixed-mode oscillations are present; see~\cref{sec:mmo_at_S_1} for details. On the other hand, the MMO property is absent when $\alpha$ is only slightly below $\alpha_2$.\footnote{See our discussion of the state of affairs for $\alpha$ only slightly smaller than $\alpha_2$ in~\cref{rem-alpha_less_than_alpha2}.}

\begin{table}
{\footnotesize
\caption{Numerically-observed (roughly approximate) critical values for the forcing period (see \cref{fig:result_diagram}), measured in the integral units of the variable $t$; this data is obtained with model parameters fixed according to \cref{tab:parameter_values-numerics}.}\label{tab:observed-critical_period_values}
\begin{center}
\begin{tabular}{|c|c|} \hline
\bf Label & \bf Value \\ \hline
$\alpha_0$ & $8.5$ \\
$\alpha_1$ & $8.2$ \\
$\alpha_2$ & $7.5$ \\ \hline
\end{tabular}
\end{center}
}
\end{table}
The simple MMO at $S_1$ implies that the stimulus presented to $S_2$ amounts to a periodic sequence of kicks with period $2\alpha$. Of course, if $2\alpha > \alpha_0$, the overall effect is attraction to a solution close to a $2\alpha$-periodic {\bf  traveling wave, with period equal to twice the forcing period} for all $S_j$ with $j \geq 2$ (again, this will be shown rigorously) -- and this is the first instance in which we have a complete understanding of phenomenology beyond the simplest traveling wave for low frequency. But it is also possible that $2\alpha < \alpha_0$; and in the latter case, we note there may yet be a $4\alpha$-periodic oscillation of simple mixed-mode type at $S_2$, or a vastly more complicated situation; in the former case, the prior reasoning extends inductively; in the latter, we illustrate typically observed behavior through a few representative numerical simulations. In particular, these latter simulations speak to the manner in which MMOs are transmitted down the feedforward chain. Our numerics indicate that when they arise, complex MMOs induce less complex behavior (sometimes of MMO type, not always) down the chain; we investigate this attenuation further in~\cref{subsec:propagation_complex_mixed-numerics}.

\section{Existence of simple $\alpha$-periodic TW}
\label{sec:simple_TW}

We begin with a detailed study of the behavior of $S_1$ when $A>0$, assuming that the system begins at initial condition $(c,f(c))$. If it were to receive a single stimulus, this neuron would instantaneously find itself at $(c,v_0)$, with $v_0=f(c)-A$. Clearly, if $A$ is sufficiently large, the system will feature a large trajectory orbit (including a depolarization event) that is asymptotic to $(c,f(c))$. Since $\epsilon$ is assumed to be small, our proofs are often inspired by the case $\epsilon=0$. In this case, the trajectory after kick is no more continuous, but evolves first on the right part of the critical manifold, and jumps to the left part when $u$ reaches the value $1$.   There are however some differences between the case $\epsilon=0$ and $\epsilon > 0$. For example, if $\epsilon=0$ one can prove existence and uniqueness of the periodic solution at $S_1$. But the argument of uniqueness fails for $\epsilon>0$. Therefore, we consider below the  cases $\epsilon=0$ and $\epsilon>0$ separately.  

\subsection*{\textbf{The limit case $\epsilon=0$}}

\begin{prop}\label{thm-simple_periodic-eps=0}
 We assume $\epsilon=0$ and  $A>0$ fixed sufficiently large. If $\alpha$ is sufficiently large,  $S_1$ admits a unique  $\alpha$-periodic trajectory which crosses the firing threshold $u=K$. Furthermore, this trajectory is stable.
\end{prop}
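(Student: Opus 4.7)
The plan is to exploit the singular structure of the $\epsilon=0$ flow. Away from the critical manifold $v=f(u)$, trajectories are horizontal and infinitely fast, while on its outer stable branches the slow dynamics $v_t = u - c$ determines the motion. Starting from any state $(u_0, f(u_0))$ on the left slow branch with $u_0\in(-2,c)$, a single Dirac kick of size $A$ --- taken sufficiently large that $f(u_0)-A<-2$ uniformly on this range --- drops the state to $(u_0, f(u_0)-A)$, strictly below the lower fold. The fast dynamics then project it horizontally onto the right branch at $(u_R(u_0), f(u_0)-A)$, where $u_R(u_0)>1$ is defined by $f(u_R(u_0))=f(u_0)-A$; during this horizontal sweep $u$ traverses every value in $[u_0, u_R(u_0)]$ and in particular crosses the firing threshold $u=K$ on the upswing. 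Slow drift then carries the orbit up the right branch to the upper fold $(1,2)$ in finite time $T_R(u_0)$, a second fast jump takes it to $(-2,2)$, and a final slow descent down the left branch for the remaining time $\alpha-T_R(u_0)$ delivers it to $(u_1, f(u_1))$ with $u_1\in(-2,c)$.

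From the slow-dynamics relation $u_t=(u-c)/f'(u)$ on the outer branches one extracts the explicit formulas
\[
T_R(u_0) \;=\; \int_{1}^{u_R(u_0)} \frac{-f'(u)}{u-c}\,du, \qquad T_L(u) \;=\; \int_{-2}^{u} \frac{f'(u')}{u'-c}\,du',
\]
and one period of the dynamics is captured by the scalar Poincar\'e map $P(u_0)=T_L^{-1}\!\bigl(\alpha-T_R(u_0)\bigr)$. A periodic trajectory is precisely a fixed point $u^*$ of $P$, i.e.\ a solution of $T(u):=T_R(u)+T_L(u)=\alpha$. I would verify that both $T_R$ and $T_L$ are strictly increasing on $(-2,c)$: for $T_L$ the integrand is positive throughout, and for $T_R$ increasing $u_0$ lowers $f(u_0)$ (since $f$ is decreasing on the left branch), hence lowers $f(u_R)$ and so pushes $u_R$ further to the right (since $f$ is also decreasing on the right branch), enlarging the integration interval. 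In addition $T_L(u)\to+\infty$ as $u\to c^-$, the integrand having a non-integrable simple pole at $u'=c$, whereas $T_R$ stays bounded. Thus $T$ is a continuous strictly increasing bijection from $(-2,c)$ onto $[T_R(-2),+\infty)$, and for every $\alpha$ larger than some threshold $\alpha_0\ge T_R(-2)$ the equation $T(u^*)=\alpha$ admits a unique solution in $(-2,c)$, yielding existence and uniqueness.

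For stability, implicit differentiation of $T_L(P(u_0))=\alpha-T_R(u_0)$ evaluated at the fixed point gives
\[
P'(u^*) \;=\; -\frac{T_R'(u^*)}{T_L'(u^*)} \;=\; \frac{u^*-c}{u_R(u^*)-c},
\]
using $T_R'(u^*)=-f'(u^*)/(u_R(u^*)-c)$ and $T_L'(u^*)=f'(u^*)/(u^*-c)$. Since $u^*\in(-2,c)$ forces $0<c-u^*<c+2$ and $u_R(u^*)>1$ forces $u_R(u^*)-c>1-c$, we obtain $|P'(u^*)|<(c+2)/(1-c)$; because $c<-1<-1/2$ implies $(c+2)/(1-c)<1$, the Poincar\'e map is a strict contraction at $u^*$, yielding asymptotic (and in particular Lyapunov) stability of the periodic trajectory.

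The main obstacle I anticipate is providing rigorous meaning for the $\epsilon=0$ discontinuous flow, in particular justifying that each post-kick horizontal projection lands unambiguously on the right branch for every $u_0$ in an appropriate basin, and that iterates of $P$ starting from admissible initial data remain inside $(-2,c)$ rather than escaping into a region where the singular dynamics must be redefined. One also needs to confirm that the uniqueness statement applies to the intended class of threshold-crossing periodic orbits --- excluding, for instance, degenerate orbits that fail to execute a full relaxation loop --- but this follows from the monotonicity of $T$ together with the largeness of $A$.
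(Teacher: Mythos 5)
Your argument is correct, and it rests on the same underlying object as the paper's proof --- a one-dimensional return map on the left branch of the critical manifold, taking a pre-kick state to the next pre-kick state --- but you execute it along a different and, for the stability part, sharper route. The paper parametrizes the branch by the $v$-coordinate, obtains existence from the intermediate value theorem applied to $F(v)-v$ (using $F(f(c))>f(c)$ near the equilibrium and $F(\bar v)<\bar v$ higher up), proves uniqueness by a qualitative comparison of the times spent on the right branch by nearby trajectories, and establishes stability through a chain of integral inequalities comparing $u_1(\alpha)-u_2(\alpha)$ with $u_2(0)-u_1(0)$. You instead package existence and uniqueness into a single statement: strict monotonicity of $T=T_R+T_L$ together with the non-integrable pole of $T_L$ at $u=c$ gives a unique solution of $T(u^*)=\alpha$ for every $\alpha$ above the explicit threshold $T_R(-2)$. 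Your stability proof then produces the closed-form multiplier $P'(u^*)=(u^*-c)/(u_R(u^*)-c)$ (the $f'$ factors cancel at the fixed point) with the explicit contraction bound $(c+2)/(1-c)<1$, valid for $c\in(-2,-1/2)$ and hence in the excitable regime $c<-1$; this is cleaner and more quantitative than the paper's argument and makes transparent exactly which feature of the geometry ($u_R>1$ versus $u^*>-2$) forces contraction. The two caveats you flag --- giving rigorous meaning to the singular $\eps=0$ flow, and restricting the uniqueness claim to threshold-crossing orbits, whose pre-kick state necessarily lies on the left branch with $u\in(-2,c)$ --- are precisely the points the paper also leaves informal (its remark even notes that $F$ can be computed ``almost explicitly'' via the very integrals you write down), so your proposal is, if anything, a more complete account of the same proof strategy.
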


\begin{figure}
\begin{center}
\includegraphics[width=7cm, height=5cm]{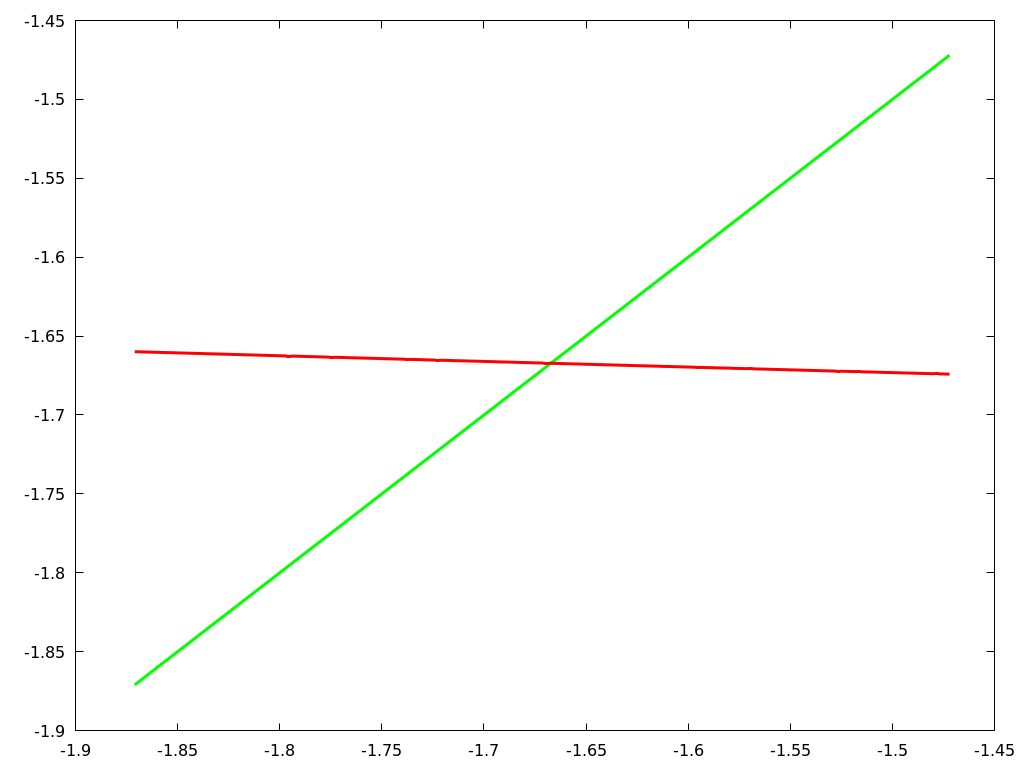}
\includegraphics[width=7cm, height=5cm]{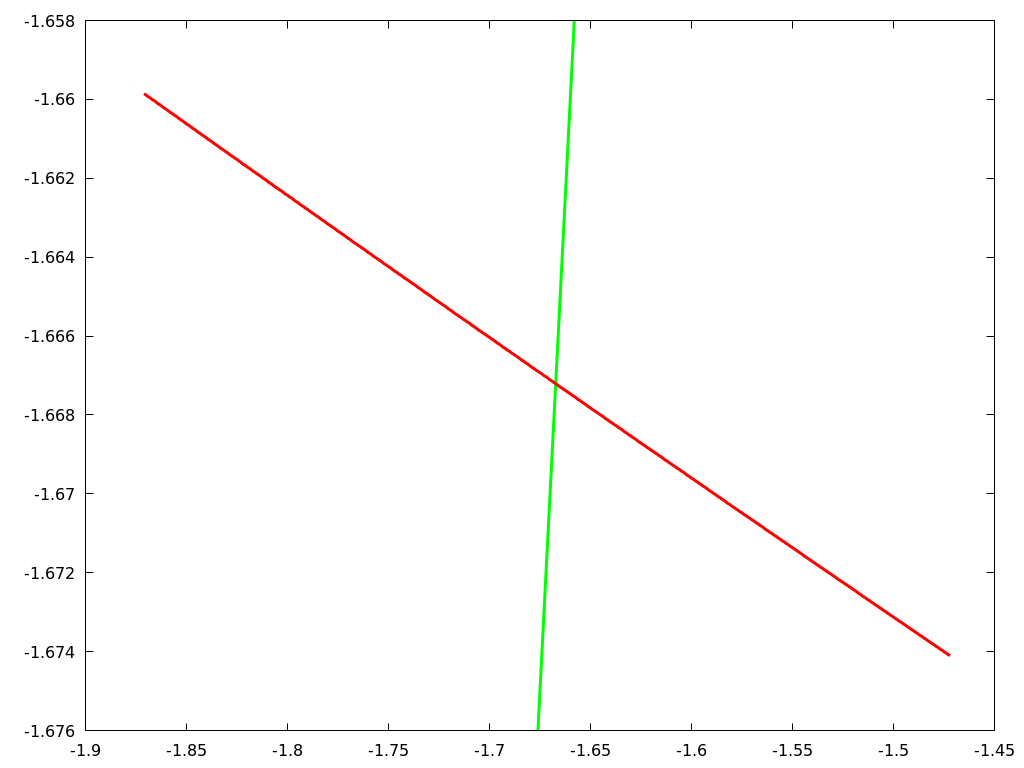}
\caption{The map $F$ (in red) and its fixed point (in green, the identity), for $\eps=0$ and $\alpha=10$. This picture illustrates the local stability of the periodic solution.}
\label{fig:F}
\end{center}
\end{figure}

\begin{figure}
\begin{center}
\includegraphics[width=7cm, height=5cm]{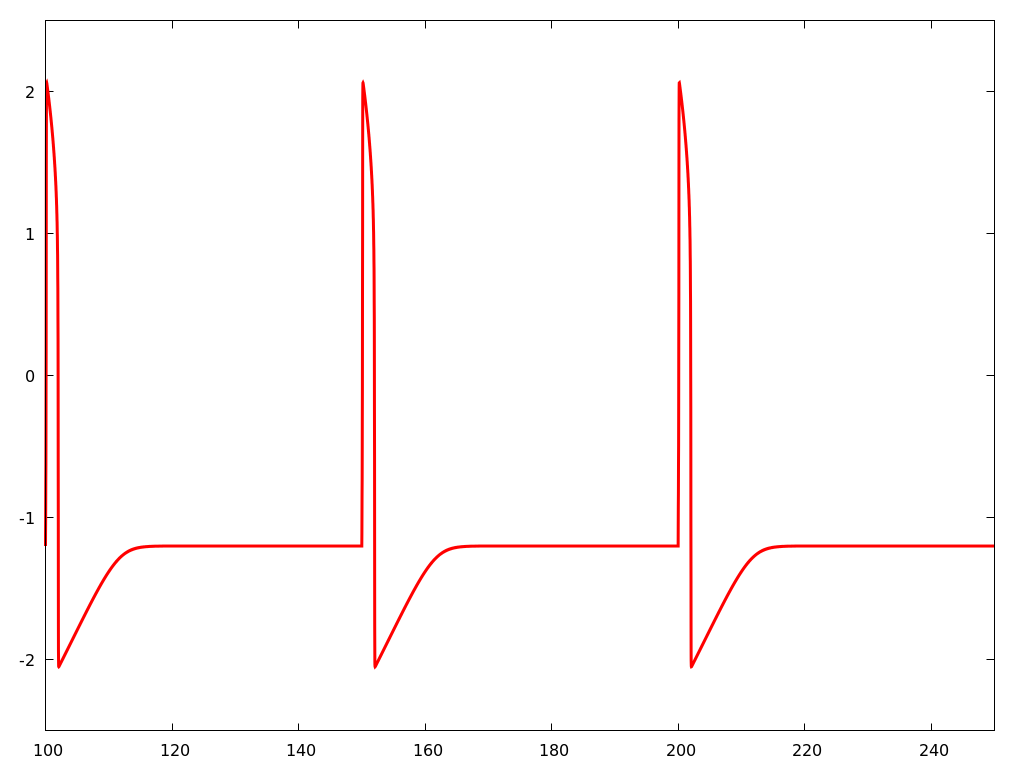}
\caption{Steady state evolution of  $u_1$ as a function of time. $\alpha=50$.}
\label{fig:u-t-al=50}
\end{center}
\end{figure}

\begin{figure}
\begin{center}
\includegraphics[width=7cm, height=5cm]{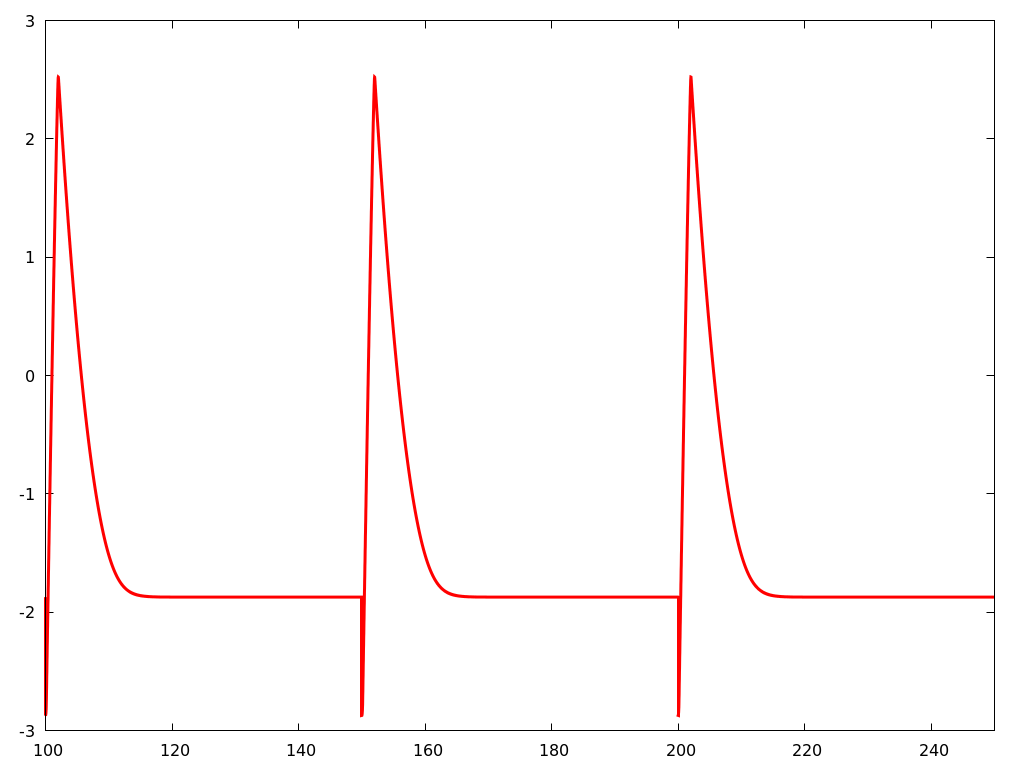}
\caption{Steady state evolution of  $v_1$ as a function of time. $\alpha=50$.}
\label{fig:v-t-al=50}
\end{center}
\end{figure}

\begin{figure}
\begin{center}
\includegraphics[width=7cm, height=5cm]{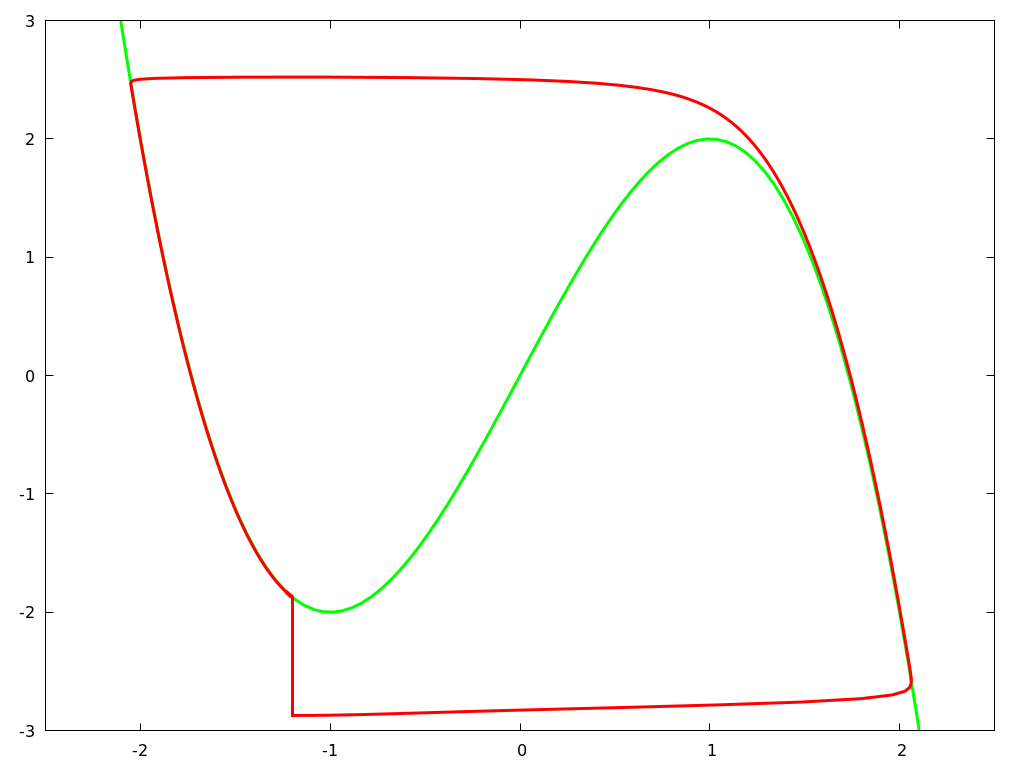}
\caption{Steady state evolution of  $\varphi_1=(u_1,v_1)$ as a function of time (red) and the graph of $v=-u^3+3u$ (green). $\alpha=50$. Of note, here the value of $\alpha$ is well above the critical value $\alpha_0$. (See \cref{tab:observed-critical_period_values} and \cref{fig:result_diagram} for reference.)}
\label{fig:u-v-al=50}
\end{center}
\end{figure}

\begin{figure}
 \begin{center}
\includegraphics[width=7cm, height=5cm]{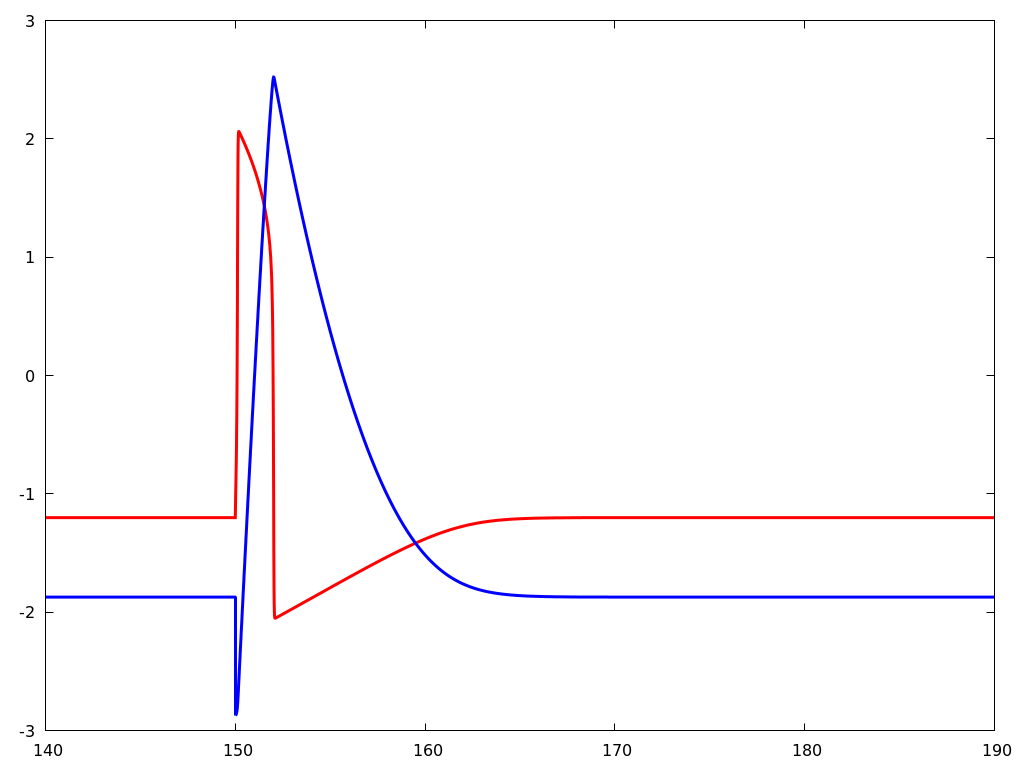}
\caption{Steady state plot of $u_1$ (red) and $v_1$ (blue) versus time; one period of oscillation is shown. $\alpha=50$.}
\label{fig:u-v-t-al=50-onep}
\end{center}
\end{figure}

\begin{proof}
\textbf{Existence} Let $F$ denote the discrete-time map that takes an initial condition on the left branch of the critical manifold $v=f(u)$ to its image under kick + time-$\alpha$ flow.  Notice that the curve under discussion can be parametrized by the $v$-coordinates of its points (1-1 correspondence). To prove existence of a periodic solution, it is sufficient to prove that $F$ admits a fixed point. First note that $F(f(c))>f(c)$. This results from the fact that $(c,f(c)$ is a stable stationary solution of the ODE. Now, we search for a point along the left branch of the critical manifold (above $(c,f(c))$) having the property that the trajectory ensuing from it returns with a smaller $v$-coordinate at time $t=\alpha$. Such a point  exists for sufficiently large $\alpha$ due to the attraction toward $(c,f(c))$. We denote $\bar{v}$ its $v$-coordinate. Then 
\[F(\bar{v})-\bar{v}<0 \mbox{ and } F(f(c))-f(c)>0 \]
By continuity of $F(v)-v$, there exists a value $v^*$ such that 

\[F(v^*)=v^* \]
which corresponds to a periodic solution.\\
\textbf{Uniqueness} 
We still denote $v^*$ the $v$-coordinate of the kick-point on the periodic solution exhibited above. Uniqueness follows from the observation that all trajectories ensuing from points with $v$-coordinates between $f(c)$ and the $v$-coordinate $v^*$  are mapped (after kick+time-$\alpha$ flow) to points with $v$-coordinates above $v^*$.  This results from the fact that any trajectory ensued below $v^*$ will spend more time on the right branch of the critical manifold than the trajectory ensued from $v^*$. A symmetric argument can be applied from above, and together these observations ensure that the periodic solution is the only trajectory whose pre-kick $v$-coordinate remains invariant under the discrete dynamics.\\
\textbf{Local Stability} 
We denote by $(u_1,v_1)(t)$ the $\alpha$-periodic solution. Let $(u_2,v_2)(t)$ be another solution with initial conditions satisfying $u_2(0)>u_1(0)$, $u_2(0)-u_1(0)$ small. And let

\medskip
\begin{minipage}[t]{0.7\textwidth}
\begin{itemize}
\item[-] $u_i(0^+), i \in\{1,2\}$ denote the values of $u_i$ on the right part of the critical manifold right after the kick;

    \item[-] $t_1$ denote the time at which  $(u_1,v_1)(t)$ reaches the fold point $(1,f(1))=(1,2)$; 
    
    \item[-] $\tau$ denote the time needed for $u_2$ to go from $u_2(0^+)$ to $u_1(0^+)$.
\end{itemize}
\end{minipage}

\vspace{0.5cm}
\noindent As a consequence,
\[
\begin{array}{rcl}
u_2(\alpha) &=&-2+\int_{t_1+\tau}^{\alpha}u'_{2}(s)ds\\ 
    &=&-2+\int_{t_1}^{\alpha-\tau}u'_{2}(s+\tau)ds\\ 
     &=&-2+\int_{t_1}^{\alpha-\tau}u'_{1}(s)ds\\ 
     &=&-2+\int_{t_1}^{\alpha}u'_{1}(s)ds-\int_{\alpha-\tau}^{\alpha}u'_{1}(s)ds\\
     &=&u_1(\alpha)-\int_{\alpha-\tau}^{\alpha}u'_{1}(s)ds\\
\end{array}    
\]

which leads to
\begin{equation*}
\begin{array}{rcl}
   u_1(\alpha)- u_2(\alpha) &=&\int_{\alpha-\tau}^{\alpha}u'_{1}(s)ds\\ 
   &<&\int_{0}^{\tau}(-u'_{2}(s))ds\\
   &<&-u_2(\tau)+u_2(0^+)\\
   &<&-u_1(0^+)+u_2(0^+)\\
   &<&-u_1(0)+u_2(0)
   
\end{array}    
\end{equation*}

since for $\alpha$ large enough $u_1(\alpha)$ is close of $c$ and therefore
\[\sup_{u\in[u_1(\alpha-\tau),u_1(\alpha)]}\frac{u-c}{f'(u)}<\inf_{u\in[u_1(0^+),u_2(0^+)]}(-\frac{u-c}{f'(u)}) \]
and also
\[-u_1(0^+)+u_2(0^+)<-u_1(0)+u_2(0)\]
since
\[\sup_{u\in[u_1(\alpha-\tau),u_1(\alpha)]} \lvert f'(u) \rvert <\sup_{u\in[u_1(0^+),u_2(0^+)]}f'(u) \]
\end{proof}
\begin{rem}
 \Cref{fig:F} gives a representation of $F$ for $\alpha=10$. It shows numerical evidence that 
\[ \lvert F'(v^*) \rvert <1 \]
which implies local stability. Note that $F$ can be computed almost explicitly. The calculation entails computing the times spent on the cubic -- first in the right branch, then in the left one after the jump -- and retrieving the value of $v$ at time $\alpha$; this leads to solving the equation
\[\int_{f^{-1}(v-A)}^1\frac{u-c}{f'(u)}du+\int_{f^{-1}(2)}^{f^{-1}(F(v))}\frac{u-c}{f'(u)}du=\alpha,\]
where $f^{-1}$ relates either to the right or left branch of the cubic.
\end{rem}

\subsection*{\textbf{The case $\epsilon>0$ (small)}}

We now proceed to the case $\eps>0$. The proof of the existence of a periodic solution is analogous to the case $\epsilon=0$. The proof of uniqueness does not extend to the case $\epsilon>0$. However, we show local stability, which in turn implies that the periodic solution is isolated. Stability is obtained by specific computations.
Let $F$ denote the mapping which maps an initial condition $(u,v)(0))$ to  $(u,v)(\alpha)$ corresponding to the point on the trajectory after kick + time $\alpha$ under the flow.
\begin{prop}\label{thm-simple_periodic-eps>0}
We assume  $A>0$ fixed sufficiently large. There exists $\epsilon_0>0$ and $\alpha_0>0$ such that for every $\epsilon \in (0,\epsilon_0)$ and for all $\alpha>\alpha_0$, $S_1$ has an  $\alpha$-periodic trajectory which crosses the firing threshold $u=K$, $i.e.$ there exists an initial condition $(u^*,v^*) \in \mathbb{R}^2$ such that the solution $(u,v)(t)$ ensued from there satisfies $(u,v)(\alpha)=(u^*,v^*)$.   Furthermore this trajectory is locally asymptotically stable, in the sense that there exists a neighborhood of  $(u^*,v^*)$  such that for every initial condition in this neighborhood 
\[\lim_{n\rightarrow +\infty}  (u,v)(n\alpha)=(u^*,v^*).\]
\end{prop}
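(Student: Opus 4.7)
The plan is to follow the scaffolding of the $\epsilon=0$ argument in Proposition~\ref{thm-simple_periodic-eps=0}, using geometric singular perturbation theory \cite{Feni-1979,Jon-1995} to promote its key geometric features to the perturbed system, and then performing an explicit comparison computation modeled on the one the authors give for $\epsilon=0$ to obtain local asymptotic stability. Throughout, let $P_\epsilon:\mathbb{R}^2\to\mathbb{R}^2$ denote the discrete map that applies a single kick (shift by $-A$ in the $v$-direction) followed by the time-$\alpha$ flow of the isolated FHN vector field; a fixed point of $P_\epsilon$ is precisely an $\alpha$-periodic orbit of $S_1$.

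For existence, I would define a one-dimensional Poincar\'e-type map mimicking the $\epsilon=0$ case. Fix a small transversal section $\Sigma$ at $u=c$, parametrized by the $v$-coordinate on an interval $I$ containing $f(c)$. By Fenichel's theorem, for $\epsilon\in(0,\epsilon_0)$ the two branches of the critical manifold (away from the fold points) admit normally attracting slow manifolds $M_\epsilon^\pm$ that are $O(\epsilon)$-close to them. For $A$ large enough the kick image $(c,v-A)$ of a point $(c,v)\in\Sigma$ lies well below $M_\epsilon^-$; the fast flow carries it onto $M_\epsilon^+$, the slow flow ferries it up $M_\epsilon^+$ to the right fold, a non-canard fold passage (controlled by the Krupa--Szmolyan blow-up analysis \cite{Kru-2001}) deposits it on $M_\epsilon^-$, and finally the slow flow returns it toward $(c,f(c))$. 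For $\alpha$ sufficiently large this yields a continuous return map $F_\epsilon:I\to\mathbb{R}$ with $F_\epsilon(f(c))>f(c)$ (because $(c,f(c))$ is a sink of the unforced system) and $F_\epsilon(v)<v$ for some $v$ somewhat above $f(c)$ (because the trajectory returns exponentially close to the sink). The intermediate value theorem then delivers a fixed point $v^*\in I$. The 2D nature of $P_\epsilon$ does not spoil this reduction: the exponential transverse contraction toward $M_\epsilon^-$ identifies $F_\epsilon$ with the restriction of $P_\epsilon$ to a center-like 1D invariant curve up to exponentially small corrections in $1/\epsilon$, so a fixed point of $F_\epsilon$ lifts to a fixed point $(u^*,v^*)$ of $P_\epsilon$.

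For local asymptotic stability I would linearize $P_\epsilon$ at $(u^*,v^*)$ and bound both eigenvalues of $DP_\epsilon(u^*,v^*)$ by $1$ in modulus. The transverse eigenvalue is $O(e^{-c\alpha/\epsilon})$, by the standard Fenichel contraction estimate applied across the two slow legs of the trajectory. The tangential eigenvalue is controlled by adapting the comparison computation already carried out for $\epsilon=0$: along $M_\epsilon^\pm$ the reduced dynamics is an $O(\epsilon)$-perturbation of the singular slow ODE, so the same telescoping integral bound
\[
|u_1(\alpha)-u_2(\alpha)| \;<\; |u_1(0)-u_2(0)|
\]
holds up to an $O(\epsilon)+O(\epsilon^{2/3})$ correction (the second contribution arising from the fold transit). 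Choosing $\epsilon_0$ small enough preserves the strict inequality, yielding spectral radius $<1$ and thus local asymptotic stability; this is compatible with the loss of uniqueness, since nearby Lyapunov-stable fixed points with exponentially small basins of repulsion are not excluded.

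The main obstacle is the fold passage at $(1,2)$, where Fenichel theory breaks down and where uniqueness actually fails for $\epsilon>0$. Two things must be handled carefully there: (i) excluding the canard regime, which is what ruins uniqueness and could a priori destroy the monotone comparison; and (ii) obtaining explicit, $\alpha$-uniform estimates with tracked $\epsilon$-dependence for the fold transit map, fine enough to preserve the strict contraction inherited from the $\epsilon=0$ baseline. Both are addressed by Krupa--Szmolyan blow-up \cite{Kru-2001}, provided one stays in the non-canard regime --- which is guaranteed by the standing hypothesis that $A$ is large, so that the post-kick trajectory lands on $M_\epsilon^+$ at a point uniformly below the canard-sensitive zone at the fold. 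Outside a fixed neighborhood of the fold, everything reduces to Fenichel theory plus routine perturbation of the $\epsilon=0$ estimates in Proposition~\ref{thm-simple_periodic-eps=0}.
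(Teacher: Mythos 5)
Your proposal is broadly sound but takes a genuinely different route from the paper on both counts. For existence, you reduce to a one-dimensional return map on a section via Fenichel theory and apply the intermediate value theorem; the paper instead applies the Brouwer fixed point theorem directly to the two-dimensional kick-plus-flow map $F$ on a small closed ball $B$ around $(c,f(c))$, using only the global asymptotic stability of the rest point to get $F(B)\subset B$ for $\alpha$ large enough --- no slow-manifold reduction, no fold analysis, and no need to lift a 1D fixed point back to a genuine 2D one (the step you gloss over with ``up to exponentially small corrections,'' which would in fact require constructing an honest invariant curve for $P_\epsilon$). For stability, you propose to linearize the Poincar\'e map and bound the tangential and transverse eigenvalues separately, invoking Krupa--Szmolyan blow-up at the fold; the paper explicitly flags this as the problematic route (a remark notes that ``the jump brings some technical difficulties with respect to the local change of variables around the periodic solution'') and sidesteps it with a direct Lyapunov-type computation: setting $g(t)=\epsilon(u_2-u_1)^2+(v_2-v_1)^2$ for a nearby solution, one finds $g'(t)=2(u_2-u_1)^2\bigl(f'(u_1)-3u_1(u_2-u_1)-(u_2-u_1)^2\bigr)$, and $g(\alpha)<g(0)$ follows because the periodic orbit spends only $O(\epsilon)$ time where $f'>0$ and a uniformly positive time where $f'<0$. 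Your approach buys finer quantitative information (eigenvalue asymptotics, explicit $\epsilon$-scalings), but the two steps you defer to the blow-up literature --- the clean tangential/transverse splitting of $DP_\epsilon$ through the fold, and the survival of the $\epsilon=0$ contraction under the fold correction --- are exactly the steps the authors judged hard enough to avoid, and as written they remain assertions rather than proofs. Note also that the paper's $g$-computation is essentially the Floquet-type condition $\int_0^\alpha f'(u_1(s))\,ds<0$ in disguise, which is what your tangential-eigenvalue bound would ultimately be computing as well.
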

\begin{proof}
The existence of a periodic solution follows from the Brouwer theorem. Consider a small enough closed ball $B$ around $(c,f(c))$; for $\alpha$ large enough, global asymptotic stability of $(c,f(c))$ for the ODE, ensures $F(B)\subset B$.\\
We now consider the stability.
We denote $(u_1,v_1)(t)$ the $\alpha$-periodic solution. Let $(u_2,v_2)(t)$ be another solution with initial conditions close to $(u_1,v_1)(0)$ .
Let 
\[g(t)=\epsilon(u_2-u_1)^2+(v_2-v_1)^2.\]
We have
\[g'(t)=2u^2\big(f'(u_1)-3u_1u-u^2 \big)\]
with
\[u=u_2-u_1.\]
It follows that 
\[g(\alpha)-g(0)=2\int_0^\alpha u^2\big(f'(u_1)-3u_1u-u^2 \big)dt.\]
Now, we remark that $u$ is controlled by the initial conditions, and the trajectories $u_1$ and $u_2$ are well known. Furthermore, a simple computation shows that the time for which  $f'(u_1(s))>0$ is of $O(\epsilon)$, while the time for which $f'(u_1(s))<0$ is uniformly greater than a positive constant. Altogether, assuming $u(0)$ small enough leads to the conclusion that
\[\int_0^\alpha u^2\big(f'(u_1)-3u_1u-u^2 \big)dt<0,\]
which means that
\[g(\alpha)<g(0)\]
and implies the result.
\end{proof}
\begin{rem}
The reader might think to apply the classical results for the stability of limit-cycles as in  \cite{Per-2001} p 216 (or \cite{And-1971} for the result with proof). However, looking into the details shows that the jump brings some technical difficulties with respect to the local change of variables around the periodic solution, which prevent a direct application of these results. The general condition  ensuring stability would write 
\[\int_0^\alpha f'(u_1(s))ds<0\]
which is finally very close to the argument provided here.
\end{rem}

\begin{rem}
As mentioned above,  the proof for uniqueness of the periodic solution provided for $\epsilon=0$ does not extend to the case $\eps>0$. The point is that when $\epsilon>0$ there is some time spent in the fast trajectories. A trajectory starting below another goes faster along the fast fibers but spend more time in the slow manifolds. It follows that for very close trajectories, arguments provided for $\epsilon=0$ do not apply. We will illustrate how this phenomenon may lead to the emergence of MMOs.  
\end{rem}

The existence of a travelling wave solution is stated now.
\begin{cor}\label{cor-simple_periodic_TW}
Assume the hypothesis are as in \cref{thm-simple_periodic-eps>0}. Then $\{ S_j \}_{j \geq 1}$ has a $\alpha$-periodic traveling wave solution.
\end{cor}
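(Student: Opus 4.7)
The plan is to construct the traveling wave explicitly by induction on $j$, leveraging the homogeneity of the chain: every node obeys the same FHN ODE, and every coupling kick has the same magnitude $A$. The base case $j=1$ is handed to us by \cref{thm-simple_periodic-eps>0}, which provides an $\alpha$-periodic orbit $\psi$ at $S_1$ that crosses the firing threshold $u=K$. The inductive step rests on the observation that such a $\psi$ delivers to $S_2$ an input of precisely the form hypothesized at $S_1$ — namely an $\alpha$-periodic train of Dirac kicks of amplitude $A$ in the recovery variable — so that a time-shifted copy of $\psi$ also solves the $S_2$ equation.

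To pin down the wave delay, I would argue as follows. Within one period, the kick at $t=0$ sends $v$ to $f(c)-A$; for $A$ sufficiently large this lies strictly below the lower fold value $f(-1)=-2$, so the ensuing fast horizontal excursion carries $u$ across $K$ on the upswing with $v<0$, at some unique time $\beta\in(0,\alpha)$. The only other crossing of $u=K$ per period occurs during the fast jump from the right fold at $v\approx 2>0$, and is screened out by the indicator $\mathbf{1}_{\{v_1<0\}}$. Consequently $S_2$ receives an $\alpha$-periodic train of amplitude-$A$ kicks at times $\beta+k\alpha$, which is exactly the forcing covered by \cref{thm-simple_periodic-eps>0}.

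I would then define $\varphi_j(t):=\psi(t-(j-1)\beta)$ for all $j\geq 1$ and verify directly that $\underline{\varphi}$ is a solution of \cref{E:main_model}: between kicks each $\varphi_j$ satisfies the FHN ODE because $\psi$ does, and the kick to $\varphi_j$ arrives exactly when $\varphi_{j-1}$ crosses threshold on the upswing, which by time-shift invariance occurs at times $(j-1)\beta+\beta+k\alpha=j\beta+k\alpha$ — matching the kick schedule for $\psi(\cdot-(j-1)\beta)$ by construction. After an innocuous reparametrization of the profile, this fits the paper's TW template $\varphi_j(t)=\tilde{\psi}(t-j\beta)$ with wave delay $\beta$ and period $\alpha$. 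The main technical subtlety I anticipate is verifying that $\beta$ is unambiguously defined per period (single upswing crossing with $v<0$, downswing crossing with $v>0$); this follows from the fold geometry of $f$ and the assumption that $A$ is large enough to clear the lower fold, and is inherited uniformly down the chain because each $\varphi_j$ is merely a time shift of $\psi$. Note that uniqueness of the $\alpha$-periodic orbit at $S_j$ is not needed — we only need that $\psi(\cdot-(j-1)\beta)$ is \emph{some} solution, which it manifestly is.
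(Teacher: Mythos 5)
Your proposal is correct and follows essentially the same route as the paper's (much terser) proof: exploit homogeneity of the chain and realize the wave as time-shifted copies of the $\alpha$-periodic orbit at $S_1$, with the delay $\beta$ given by the time to reach the threshold crossing $u=K$ on the upswing with $v<0$. Your additional care in checking that $\beta$ is well defined per period (the indicator $\mathbf{1}_{\{v<0\}}$ screening out the downswing crossing) and that uniqueness of the periodic orbit is not needed only makes explicit what the paper leaves implicit.
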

\begin{proof}
Since the chain is homogeneous in $i$, if every node starts in an initial condition corresponding to the periodic solution for $S_1$, the trajectories will be the same for each $i$, with a delay between the node $i$ and the node $i+1$ corresponding to the time needed for the node $i$ to reach the semi-axis $u=0,v<0$. 
\end{proof}
The next proposition deals with the stability of the traveling wave solution.  Let $(\varphi_j(t)), 1\leq j \leq N$ the traveling wave solution exhibited in the corollary \ref{cor-simple_periodic_TW}. For any initial condition of the system \cref{E:main_model}, for each $j\in \{1,..,N\}$, we denote by $(t^n_j)_{n\in \N}$ the sequence of times at which the node $S_j$ receives a kick. The following proposition holds

\begin{prop}\label{qual-simple_periodic_TW}
Assume  the hypothesis are as in \cref{thm-simple_periodic-eps>0}. Then the TW is locally stable in the following sense, $\forall j \in \{1,...,N\}$, 
\[ \lim_{n\rightarrow +\infty} t^n_{j+1}-t^n_j=\alpha\]
and
\[  \lim_{n\rightarrow +\infty} S_j(t^n_j)=(u^*,v^*).\]
\end{prop}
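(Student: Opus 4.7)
The plan is to proceed by induction on $j$, exploiting the feedforward structure of~\cref{E:main_model}: the dynamics at $S_{j+1}$ depend on $S_j$ only through the kick sequence $(t_{j+1}^n)_n$, namely the times at which $u_j$ crosses $u = K$ on the fast upswing.

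For the base case $j = 1$, the external forcing is exactly $\alpha$-periodic, so the period claim holds identically, while $S_1(t_1^n) \to (u^*, v^*)$ is precisely the local asymptotic stability established in \cref{thm-simple_periodic-eps>0}, provided the initial state lies in the basin of the periodic orbit. For the inductive step, assume both claims at level $j$. Continuous dependence on initial data, combined with $S_j(t_j^n) \to (u^*, v^*)$ and inter-kick times converging to $\alpha$, forces the trajectory of $S_j$ on $[t_j^n, t_j^{n+1}]$ to converge uniformly (as $n \to \infty$) to the periodic orbit $\varphi_1$ on $[0, \alpha]$. Because $\varphi_1$ crosses $u = K$ transversally on the fast upswing exactly once per period at some offset $\beta \in (0, \alpha)$ after a kick, transversality gives $t_{j+1}^n - t_j^n \to \beta$. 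A telescoping identity then yields
\[
t_{j+1}^{n+1} - t_{j+1}^n = (t_j^{n+1} - t_j^n) + (t_{j+1}^{n+1} - t_j^{n+1}) - (t_{j+1}^n - t_j^n) \longrightarrow \alpha,
\]
so the kicks received by $S_{j+1}$ are asymptotically $\alpha$-periodic.

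It remains to prove $S_{j+1}(t_{j+1}^n) \to (u^*, v^*)$. Let $\Phi_s$ denote the kick-plus-time-$s$ flow map on the $(u,v)$ plane. The pre-kick states at $S_{j+1}$ satisfy the non-autonomous recursion
\[
S_{j+1}(t_{j+1}^{n+1}) = \Phi_{\alpha + \delta_n}\bigl(S_{j+1}(t_{j+1}^n)\bigr), \qquad \delta_n \to 0.
\]
Since $\Phi_\alpha(u^*, v^*) = (u^*, v^*)$ is a locally contracting fixed point (by the Lyapunov estimate of \cref{thm-simple_periodic-eps>0}) and $s \mapsto \Phi_s$ is continuous near $s = \alpha$, a standard perturbation argument for asymptotically autonomous discrete systems gives $\Phi_{\alpha + \delta_{n-1}} \circ \cdots \circ \Phi_{\alpha + \delta_0}$ iterates converging to $(u^*, v^*)$, from any starting point in a suitable neighborhood of $(u^*, v^*)$.

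The main obstacle is the last step: the contraction property of $\Phi_\alpha$ must be shown to persist uniformly under the time-shift perturbations $\delta_n$. The cleanest route is to revisit the Lyapunov functional $g(t) = \eps(u_2 - u_1)^2 + (v_2 - v_1)^2$ from the proof of \cref{thm-simple_periodic-eps>0} and to verify that its net change over one perturbed period, $g(\alpha + \delta_n) - g(0)$, remains strictly negative for all sufficiently large $n$: the leading-order negative contribution from the interval on which $f'(u_1) < 0$ dominates the $O(|\delta_n|)$ correction induced by the timing perturbation, uniformly in an appropriate neighborhood of the periodic orbit. Granted this uniform contraction, the inductive step closes and the result follows for all $j$.
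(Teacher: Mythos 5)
The paper states \cref{qual-simple_periodic_TW} without any proof --- the proposition is followed immediately by a figure and the heuristic velocity calculation --- so there is no argument of the authors' to compare yours against; your proposal is an attempt to fill a genuine gap in the text. Your overall strategy (induction down the chain via the feedforward structure, reduction of $S_{j+1}$ to a single cell driven by an asymptotically $\alpha$-periodic kick sequence, and perturbation of the contracting return map $\Phi_\alpha$) is the natural one and is consistent with the authors' remark that the single-cell analysis ``extends inductively because of the homogeneity in the network.'' The telescoping identity is correct, and the transversality of the threshold crossing on the fast upswing (where $u_t=(f(u)-v)/\eps$ is large and positive, while the indicator $\mathbf{1}_{\{v<0\}}$ rules out the return crossing at the fold, where $v=f(1)=2>0$) does give continuity of the crossing time in the data and hence $t^{n+1}_{j+1}-t^{n}_{j+1}\to\alpha$.

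Two points need attention. First, the proposition as printed asserts $t^{n}_{j+1}-t^{n}_{j}\to\alpha$, whereas your argument (correctly) yields $t^{n}_{j+1}-t^{n}_{j}\to\beta=T_e$, the propagation lag, together with $t^{n+1}_{j}-t^{n}_{j}\to\alpha$; almost certainly the statement contains a typo and the intended claim is the latter, but you should flag the discrepancy rather than silently prove a different limit. Second --- and this is the genuine gap, which you identify but do not close --- the last step needs a \emph{uniform} contraction for the whole family $\Phi_{\alpha+\delta_n}$ on a fixed neighborhood of $(u^*,v^*)$. The proof of \cref{thm-simple_periodic-eps>0} establishes only the strict pointwise inequality $g(\alpha)<g(0)$ for a single nearby trajectory, and strict decrease under one iterate does not by itself force the non-autonomous composition $\Phi_{\alpha+\delta_{n-1}}\circ\cdots\circ\Phi_{\alpha+\delta_0}$ to converge. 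What is required is a factor $\lambda<1$ with $g(\Phi_{\alpha+\delta}(p))\le\lambda\,g(p)$ for all $p$ in a fixed ball and all $|\delta|$ small; this is extractable from the paper's estimate, since $\int_0^\alpha f'(u_1(s))\,ds$ is bounded away from zero (the time with $f'(u_1)>0$ is $O(\eps)$) and the timing perturbation only adds or removes an $O(|\delta_n|)$ stretch spent near the equilibrium, but it must be stated and proved rather than asserted. A minor bookkeeping point: you also implicitly assume that every kick received by $S_j$ produces exactly one threshold crossing, so that the indexings of $(t^n_j)$ and $(t^n_{j+1})$ stay in registration; this holds in the local setting of the proposition but deserves a sentence.
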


\begin{figure}
\begin{center}
\includegraphics[width=7cm, height=5cm]{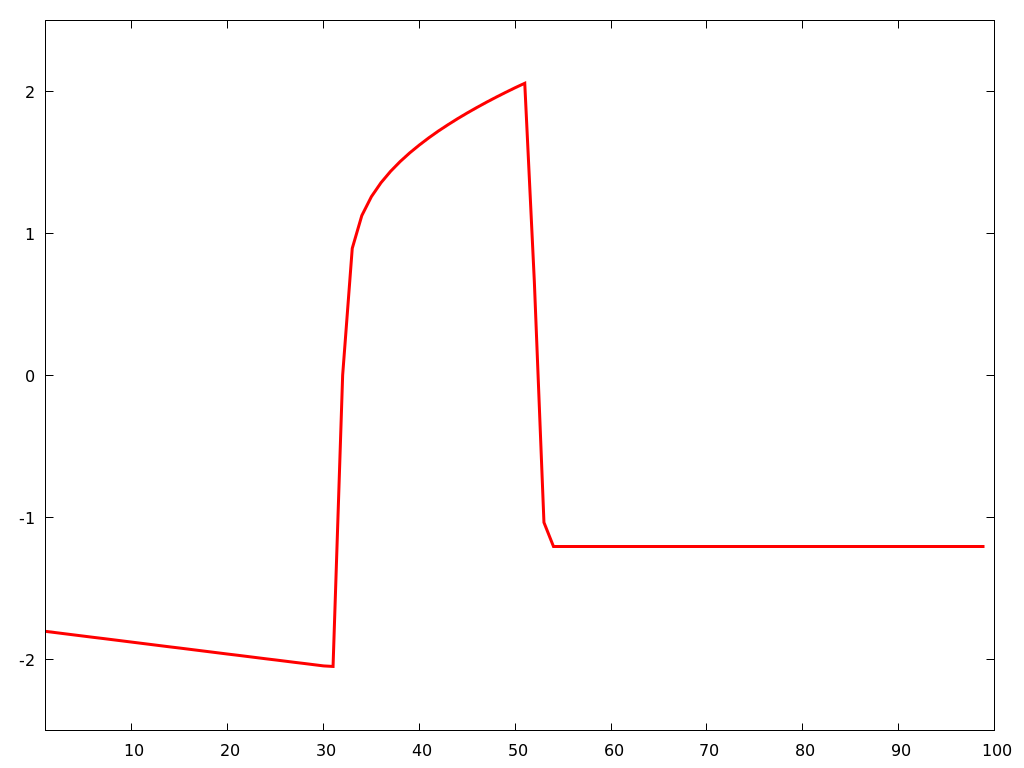}
\caption{Travelling wave profile. Represented  $(u_i)_{1\leq i\leq 100}$ at time $t=110$. $\alpha=50$.}
\label{fig:u-x-t=155-al=50}
\end{center}
\end{figure}

%

We conclude this discussion with a heuristic calculation of the velocity (and related quantities) for the traveling wave. Notice that in this simplest case, one can deduce an explicit estimate on the speed of the resultant $\alpha$-periodic traveling wave, or equivalently, the time of propagation for pulses presented to the front node. For any fixed cell, the time lag between stimulus arrival and consequent excitation of its right neighbor is approximately equal to the amount of time that it takes for a trajectory to travel from $u=c$ to $u=K$. Accordingly, we obtain
\begin{equation*}
T_e=\int_{t_1}^{t_2} \, dt = \int_{c}^{K}\frac{du}{f(u)-v} .
\end{equation*}
Taking the additional simplifying assumption of starting from rest allows one to replace $v$ with $f(c)-A$, and this yields the formula
\begin{equation*}
T_e\simeq \epsilon\int_{c}^{K}\frac{du}{f(u)-f(c)+A} .
\end{equation*}
In addition, notice that the time needed to excite the whole chain is $N\times T_e$. We emphasize that this is the amount of time taken by a single pulse to travel through chain front to back. Given the integral units of the spacial coordinate, all of this allows us to deduce that the speed of the traveling wave is $1/T_e$. And lastly, in accordance with the definition of TW detailed in \cref{subsec:basic_props_FHN}, one can take the parameter $\beta$ to be any element of $T_e + \alpha \mathbb{Z}$; without loss of generality, we think of $\beta$ as the smallest positive element of this set -- namely, $T_e$.

\section{Mixed-mode oscillations at $S_1$}
\label{sec:mmo_at_S_1}
It should be noted that for $\alpha_1<\alpha<\alpha_0$, prior to the simple MMO regime described in \cref{tab:parameter_values-numerics} and established herein, continuous dependence on parameters implies a highly-complex transition from the simple $\alpha$-periodic trajectory to complicated oscillations; recall \cref{fig:result_diagram}. For instance, this can be realized via canard trajectories that either follow the unstable branch of the critical manifold for some time prior to making a fast jump (a path that possibly includes threshold crossing) or otherwise canard trajectories that loop directly back toward the equilibrium without crossing threshold. It is equally worth noting that a possibly very complicated combination of small and large oscillations can ensue; see \cref{fig:u-v-al=8p4}. A detailed analysis of the bifurcation structure in this system will be the subject of subsequent investigations. For the present discussion, we establish the existence of simple MMO rigorously and then conclude this section with a few numerical investigations of the bifurcation cascade.

\begin{prop}\label{thm-two_loops-one_big_one_small}
Assume the condition on $A$ as in the hypothesis of \cref{thm-simple_periodic-eps=0}. There is an interval of periods $(\alpha_2,\alpha_1)$ such that for each $\alpha$ in this interval, the dynamics of $S_1$ feature a $2\alpha$-periodic trajectory with one large loop that crosses threshold at $u=0$ and one small simple loop that does not. (See \cref{fig:u-v-al=8}; this is the simplest mixed-mode oscillation scenario.) 
\end{prop}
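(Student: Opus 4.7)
The plan is to obtain a $2\alpha$-periodic trajectory as a fixed point of the second iterate of a kick-plus-time-$\alpha$ return map, in direct analogy with the map $F$ employed in \cref{thm-simple_periodic-eps=0}. I would parametrize pre-kick states by their $v$-coordinate along (or near, for $\eps>0$) the left branch of the critical manifold $v=f(u)$, $u<-1$, and write $F(v_0)$ for the $v$-coordinate reached at time $\alpha$ after kick-plus-flow. The essential geometric observation is that $F$ has two branches separated by $v_0 = A-2$: if $v_0 < A-2$, the kick drops the post-kick state below the left fold at $(-1,-2)$, triggering a large relaxation loop that crosses threshold $u=K$; if $v_0 > A-2$, the post-kick state remains above the fold, and the fast fibre relaxation returns it to the left branch nearby, producing a small loop. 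Denote these branches by $F_L$ and $F_S$. The target is a fixed point $v_0^*$ of $G := F_S \circ F_L$ with $v_0^* < A-2$ and $F_L(v_0^*) > A-2$, so that the orbit genuinely alternates between one large and one small loop.

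The upper threshold $\alpha_1$ is characterized by $v^*(\alpha_1) = A - 2$, where $v^*(\alpha)$ is the $F_L$-fixed point supplied by \cref{thm-simple_periodic-eps=0}; since $v^*(\alpha)$ decreases as $\alpha$ increases (more time to relax toward $(c,f(c))$), for $\alpha$ slightly below $\alpha_1$ the image of $F_L$ on a neighborhood of $v^*(\alpha_1)$ lies in $\{v>A-2\}$, so the simple $\alpha$-periodic TW cannot persist and a two-loop alternation is forced. The lower threshold $\alpha_2$ is characterized as the smallest $\alpha$ for which the small-loop relaxation along the stable left branch returns $v$ to a value below $A - 2$; monotonicity of the small-loop return in $\alpha$, together with $A$ sufficiently large so that $f(c) \ll A-2$, guarantees $\alpha_2 < \alpha_1$ and hence that the advertised interval is non-empty.

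For $\alpha \in (\alpha_2, \alpha_1)$, existence of a fixed point of $G$ proceeds via a Brouwer argument in the spirit of \cref{thm-simple_periodic-eps>0}. I would choose a closed interval $K \subset [f(c) - \delta,\, A - 2 - \delta']$ of pre-kick $v_0$-values such that (i) $F_L(K) \subset (A-2, +\infty)$, ensured by $\alpha < \alpha_1$ and continuity of $F_L$ on its domain, and (ii) $F_S(F_L(K)) \subset K$, ensured by $\alpha > \alpha_2$ together with the global asymptotic stability of $(c, f(c))$ under the slow flow $v_t = u - c$ on the left branch. Continuity of $G$ on $K$ (away from the canard strip in the case $\eps>0$) then supplies a fixed point $v_0^* \in K$ by the Brouwer fixed-point theorem. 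To rule out that $v_0^*$ is in fact already a fixed point of $F$ (and hence only $\alpha$-periodic), note that by construction $F(v_0^*) = F_L(v_0^*) > A - 2 > v_0^*$, so the minimal period is exactly $2\alpha$ and the orbit realizes the advertised geometry of one large loop followed by one small loop per period.

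The principal obstacle is constructing the invariant interval $K$ while simultaneously respecting both enclosures $F_L(K) \subset \{v > A - 2\}$ and $G(K) \subset K$, especially for $\eps > 0$, where canard dynamics smear the sharp separation between $F_L$ and $F_S$ in an $O(\eps)$-wide neighborhood of $v_0 = A - 2$. I would first carry out the construction in the singular limit $\eps = 0$, where the branch decomposition is clean and monotonicity in $\alpha$ of $F_L$ and $F_S$ can be verified by quadrature arguments of the type displayed in the remark following \cref{thm-simple_periodic-eps=0}, and then extend to small $\eps > 0$ via the perturbation procedure used to deduce \cref{thm-simple_periodic-eps>0} from \cref{thm-simple_periodic-eps=0}, shrinking $K$ so as to remain an $O(1)$-distance from the canard strip and invoking continuous dependence of the kick-plus-flow map on $\eps$.
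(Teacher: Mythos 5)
Your proposal is correct and follows essentially the same route as the paper: your two-branch decomposition of the kick-plus-flow map at the pre-kick threshold $v_0=A-2$ is exactly the paper's segment $\mathcal{C}_0$ of the critical manifold, and your Brouwer argument for a fixed point of $G=F_S\circ F_L$ on an invariant interval $K$ mirrors the paper's enclosure $B_\delta\to\mathcal{C}_0\to B_\delta$ for the second iterate, justified by the same slow-flow time estimates (first at $\eps=0$, then perturbed). The only substantive additions on your side are the explicit check that the fixed point is not already $\alpha$-periodic and the sketch of the $\eps>0$ extension, both of which the paper leaves implicit; your characterization of $\alpha_2$ via the small-loop return differs in emphasis from the paper's (which pins $\alpha_2$ to the large loop landing low enough on the descent, below $v=f(c)+A$), but this does not affect the existence of a non-empty interval.
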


\begin{figure}
\begin{center}
\includegraphics[width=7cm, height=5cm]{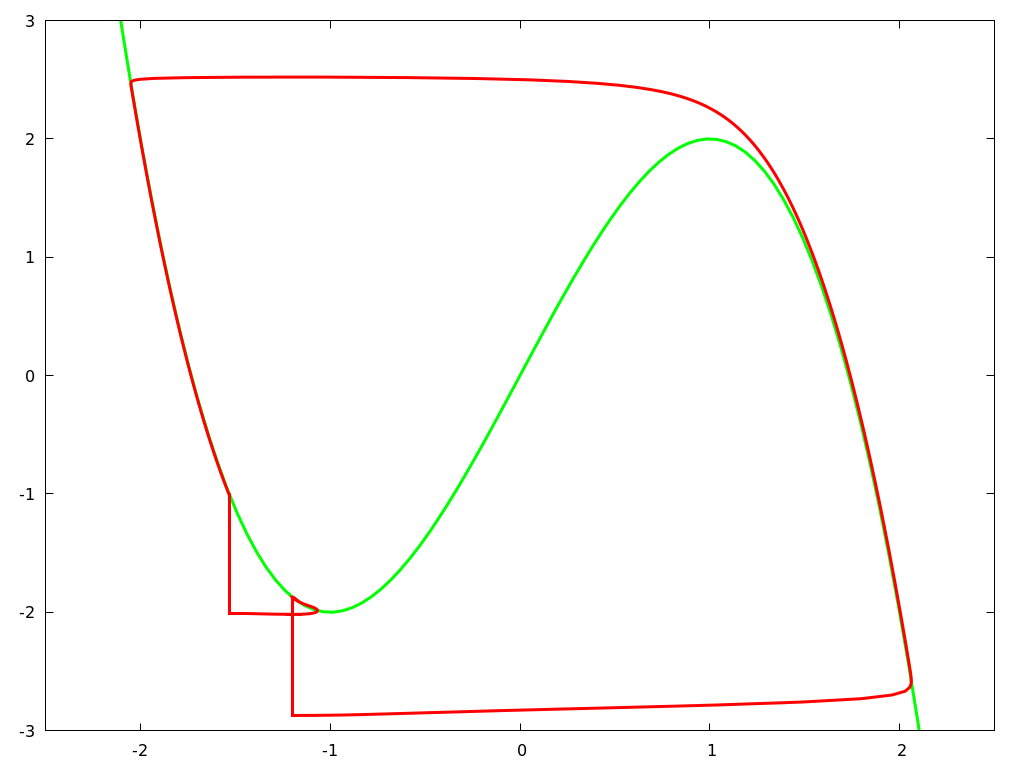}
\caption{Evolution of  $\varphi_1=(u_1,v_1)$ as a function of time. $\alpha=8$. The small oscillation corresponds to the attraction to the stable point: the kick arrives a little bit too soon. Then, we stay there until the next kick. This is a numerical realization of the result claimed in \cref{thm-two_loops-one_big_one_small}.}
\label{fig:u-v-al=8}
\end{center}
\end{figure}

\begin{figure}
\begin{center}
\includegraphics[width=7cm, height=5cm]{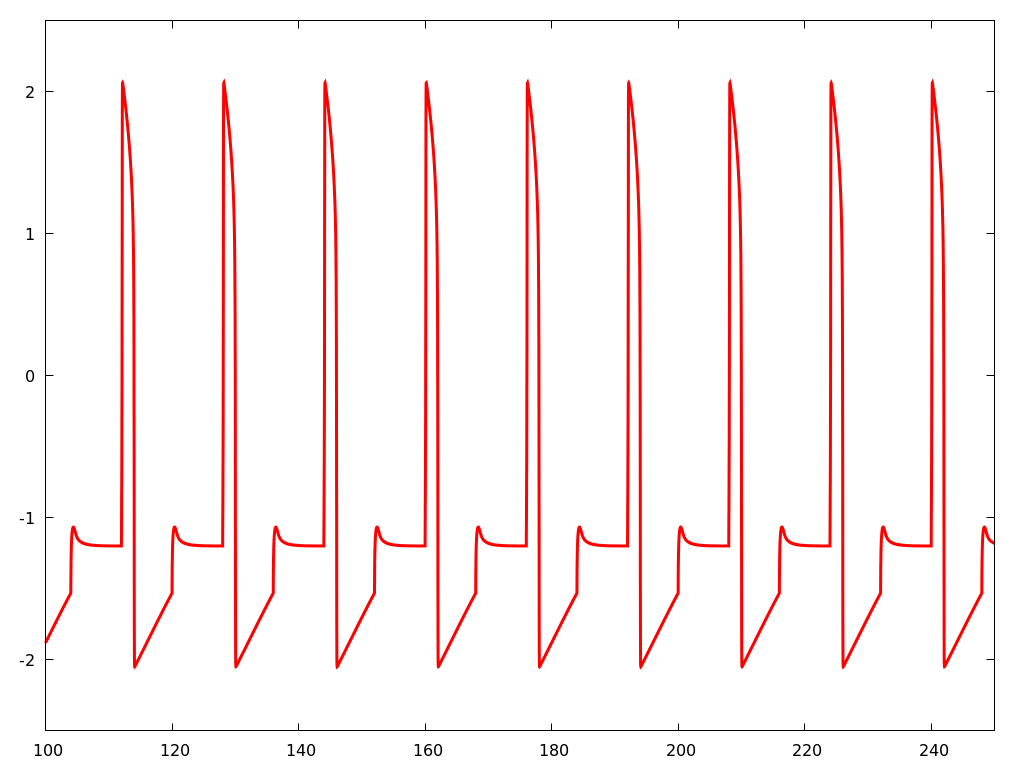}
\caption{Evolution of  $u_1$ as a function of time. $\alpha=8$.}
\label{fig:u-t-al=8}
\end{center}
\end{figure}

\begin{figure}
\begin{center}
\includegraphics[width=7cm, height=5cm]{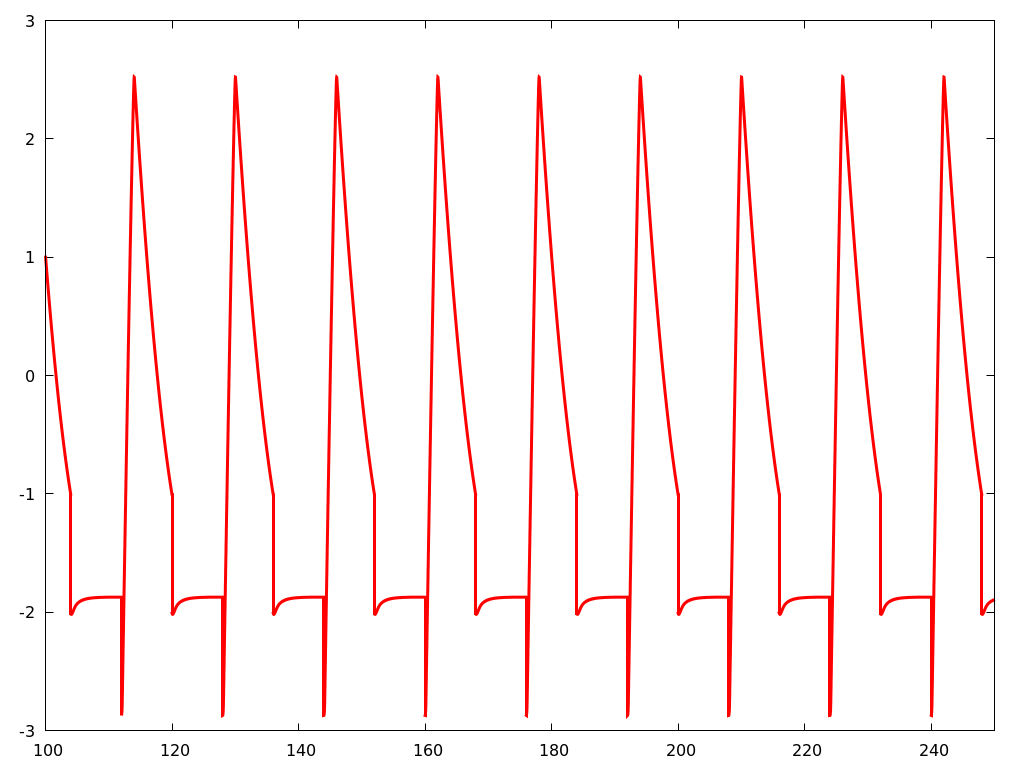}
\caption{Evolution of  $v_1$ as a function of time. $\alpha=8$.}
\label{fig:v-t-al=8}
\end{center}
\end{figure}

\begin{rem}
Altogether, this scenario results in an overall effect of doubling the depolarization period for the first cell. In this case, if $2\alpha > \alpha_0$, it follows that the $\{ S_j \}_{j \geq 2}$ behaves (asymptotically) according to the simple low-frequency paradigm of periodic traveling wave. See \cref{subsec:propagation_simple_mixed} and \cref{fig:u-x-t=110-al=8}.
\end{rem}

\begin{proof}[Proof of \cref{thm-two_loops-one_big_one_small}]
First note that there exists a trajectory $\psi(t)$ that originates on the left branch of the critical manifold and, right after a vertical kick (by $-A$), features a fast horizontal trajectory that crosses the line $u=c$ above the line $v=-2$. This trajectory then turns to the left and is simply attracted to the equilibrium (as $t \to \infty)$. In fact, there is a portion $\mathcal{C}_0$ of the critical manifold consisting of points with $v$-coordinates slightly higher than the $v$-coordinate of the origination point of this special trajectory, all of which feature precisely this simple dynamics.

Suppose $B_{\delta}$ is a ball about $(c,f(c))$. It follows not only that all trajectories starting on $\mathcal{C}_0$ will end up in $B_{\delta}$, but also that the time it takes before they enter (and never leave thereafter) is less than the time it takes for $\psi(t)$ to enter it. This is because $\psi(0)$ is the lowest ($v$-coordinate) point on $\mathcal{C}_0$.

We now deduce some necessary explicit time estimates for trajectories of the system. The time that it takes for a typical trajectory to change its $v$-coordinate from $v_1$ to $v_2$ can be computed by using the dynamics on the critical manifold and appealing to the formula
\[
t_2-t_1=\int_{v_1}^{v_2}\frac{dv}{f^{-1}(v)-c}+O(\epsilon)=\int_{f(u_1)}^{f(u_2)} \left( \frac{f'(u)}{u-c} \right) \, du+O(\epsilon), 
\]
in tandem with the change of variables $v=f(u)$. Explicit calculations are straightforward for the example $f(u)=3u-u^3$ and yield
\[
t_2-t_1 = 3(1-c^2) \left[ \ln(u_2-c)-\ln(u_1-c) \right] - 6c(u_2-u_1)-\frac{3}{2} \left( (u_2-c)^2-(u_1-c)^2 \right)+O(\epsilon).
\]
This formula is used in the calculations captured within \cref{fig:TimeEst-3}.

We argue for $\eps=0$. Assume $\alpha$ is large enough but not too large, so that a trajectory that ensues from $(c,f(c))$ ends up on $\mathcal{C}_0$ at time $t=\alpha$. It follows that there is a range of $\alpha$ such that for all $\delta$ sufficiently small, the ball $B_{\delta}$ about $(c,f(c))$ is mapped inside $\mathcal{C}_0$ after one kick and time $\alpha$ flow of the ODE. We close by noting that there exists a range of $\delta$ for which the time needed for $\psi(t)$ to enter $B_{\delta}$ after one kick plus ODE flow is much smaller than smallest of the $\alpha$ in the aforementioned interval. (See \cref{fig:TimeEst-3}.)

Altogether, this means that there is a closed ball $B_{\delta}$ about $(c,f(c))$ that is mapped within itself by the sequence of two applications of kick plus time-$\alpha$ flow. By application of the Brouwer fixed point theorem, we are guaranteed the existence of a fixed point to the discrete time map consisting of the conjunction of kicks and flows just described. This property then assures the existence of the aforementioned $2\alpha$-periodic solution to the dynamics in $S_1$ under the $\alpha$-periodic forcing.
\end{proof}

\begin{rem}\label{rem-alpha_less_than_alpha2}
When $\alpha$ is only slightly smaller than $\alpha_2$, the observed phenomenology at $S_1$ is straightforward despite not being of MMO type. The arrival of the first kick is indeed too soon on the left branch of the critical manifold -- in fact, so soon that the trajectory is not kicked below the equilibrium point $(c,f(c))$ on the graph of $f$; accordingly, the solution undergoes a simple fast trajectory to the right and then continues {\it down} the left branch of the critical manifold until the second kick; the latter then causes a vertical drop sufficient for the trajectory to clear the critical manifold and cross threshold. Just like in the case of simple MMO, this period-$2\alpha$ solution has immediate consequence for $S_2$, which evidently sees a lower-frequency input. In fact, the chain $\{ S_j \}$ for $j \geq 2$ will entrain to the $2\alpha$-periodic TW that is guaranteed for that forcing period by the analysis in~\cref{sec:simple_TW}, because the numerical values in~\cref{tab:observed-critical_period_values} indicate $2\alpha > \alpha_0$ when $\alpha \approx \alpha_2$. Of course, for other choices of the system parameters (i.e., different from those declared in~\cref{tab:parameter_values-numerics}), more complicated propagation scenarios might occur.
\end{rem}

\begin{figure}[!h]  
\begin{center}
\includegraphics[width=7cm, height=5cm]{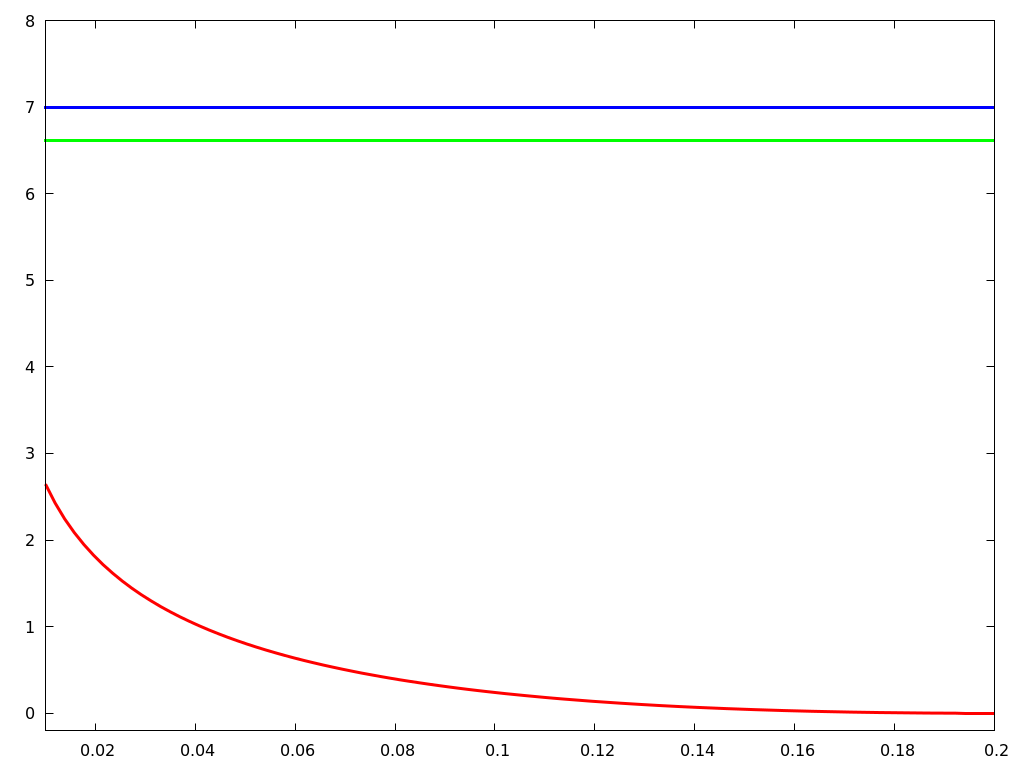}
\caption{In red, explicit time estimate in case of $\eps=0$, for $f(u)=3u-u^3$, $c=-1.2$ and $A=1$, of the time necessary for the trajectory issued from $(-1,-2)$ (the local minimal point of the graph of $f$) to reach the point $(c+\delta,f(c+\delta))$, as a function of $\delta$; in green is the amount of time necessary for the trajectory issued from $(c,f(c))$ to reach the point with $v$-coordinate $f(c)+A$ on the left branch of the critical manifold (the latter point may be taken as the definition of the other boundary point of $\mathcal{C}_0$; and blue, the time necessary for $(c,f(c))$ to reach the point on the left branch of the critical manifold possessing $v$-coordinate $-2+A$. The separation between between the green and purple establishes the existence of a range of $\alpha$ for which the estimates claimed in the proof of~\cref{thm-two_loops-one_big_one_small} are feasible.}
\label{fig:TimeEst-3}
\end{center}
\end{figure}

\begin{figure}
\begin{center}
\includegraphics[width=7cm, height=5cm]{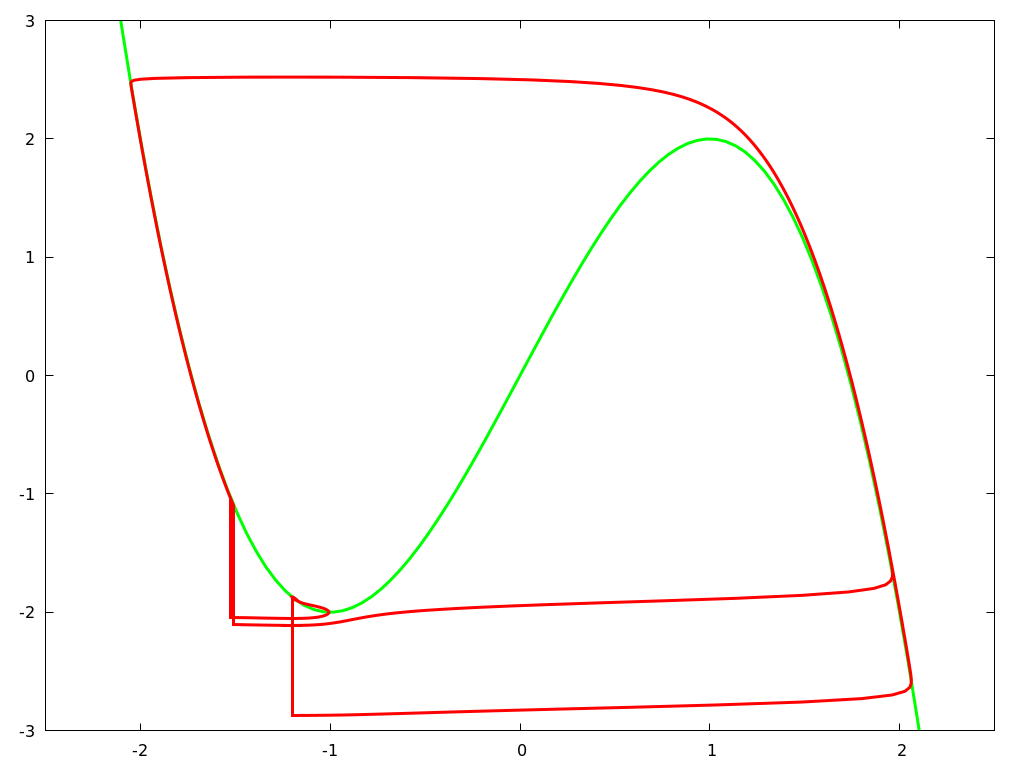}
\caption{Steady-state evolution of $\varphi_1=(u_1,v_1)$ as a function of time. $\alpha=8.3$. Here $\alpha$ is in the interval $(\alpha_1,\alpha_0)$. After a kick and a first big loop the solution is not attracted to the stable point; instead, it goes for another large oscillation. However, the time needed to reach the right portion of the slow manifold is longer on this second loop because of the velocity dissipation when the trajectory passes close to the local minimal point (-1,-2) of the graph of $f$. When the next kick arrives, the solution is further up along the left branch of the slow manifold and consequently enters a neighborhood that is directly attracted to the stable point; in turn, this gives rise to an oscillation with a much smaller amplitude. As a result, the steady-state here appears to be a periodic (or nearly so) trajectory with two large and one small oscillations.}
\label{fig:u-v-al=8p3}
\end{center}
\end{figure}

\begin{figure}
\begin{center}
\includegraphics[width=7cm, height=5cm]{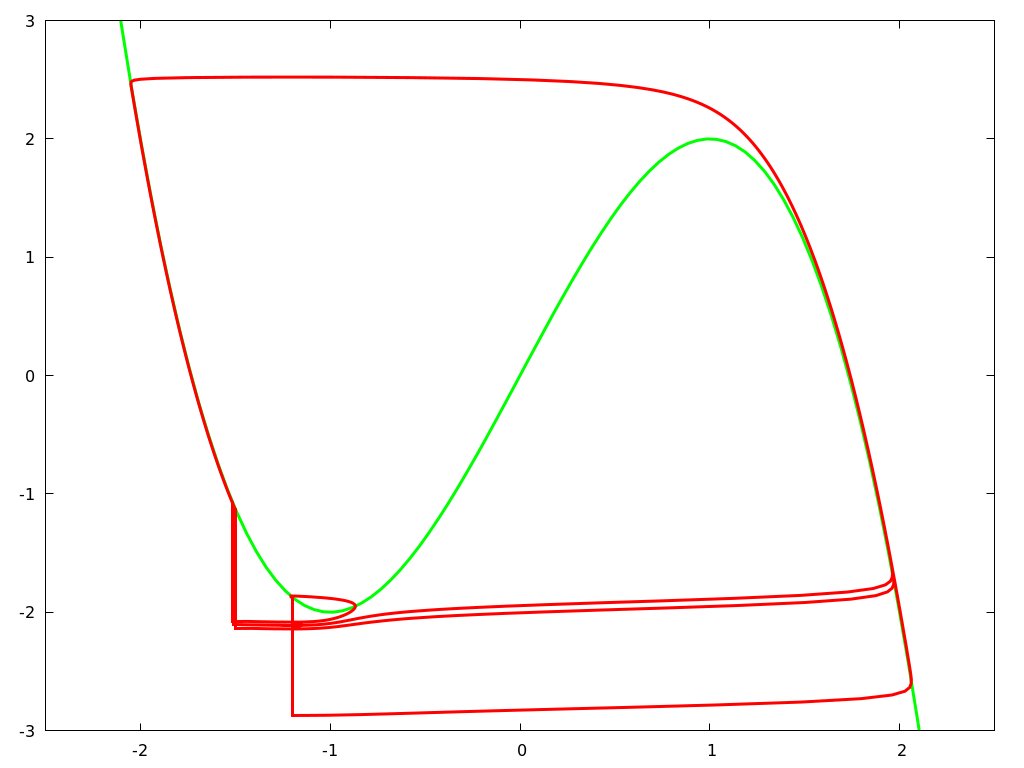}
\caption{Evolution of  $\varphi_1=(u_1,v_1)$ as a function of time. $\alpha=8.4$. The complexity of MMOs continues to grow as $\alpha$ is increased to $\alpha_0$: here, the steady state features three large oscillations and one small one. Some trajectories exhibit canard behavior.}
\label{fig:u-v-al=8p4}
\end{center}
\end{figure}

\begin{figure}
\begin{center}
\includegraphics[width=7cm, height=5cm]{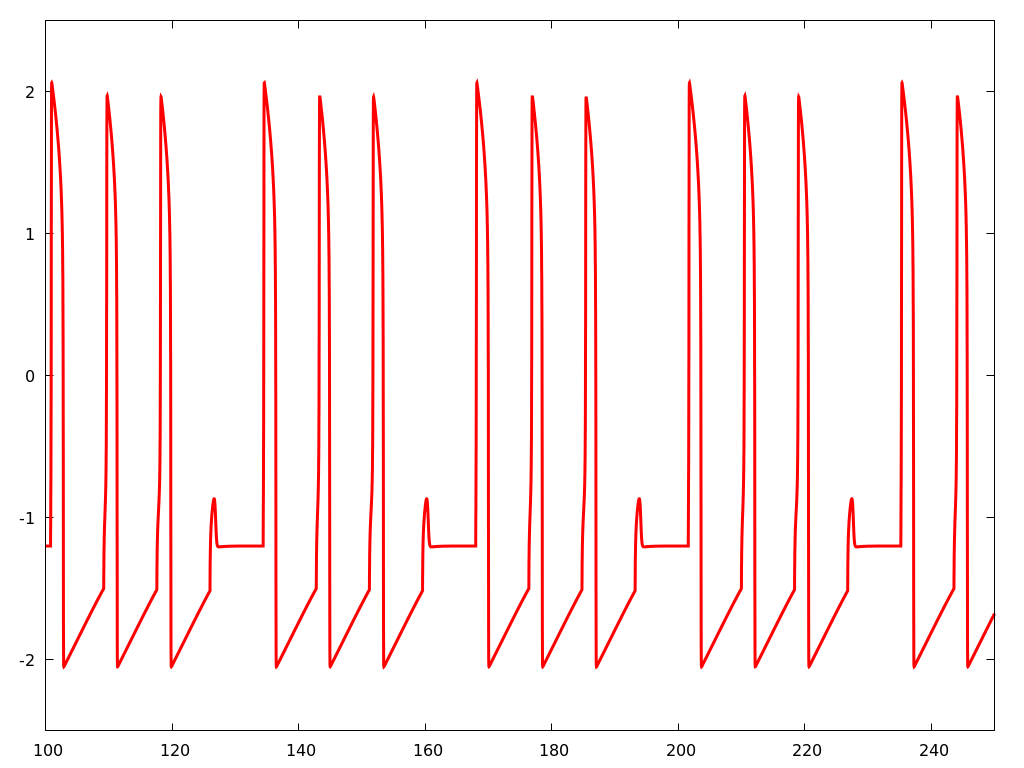}
\caption{Evolution of  the voltage variable $u_1$ at steady-state as a function of time. $\alpha=8.4$.}
\label{fig:u-t-al=8p4}
\end{center}
\end{figure}

\begin{figure}
\begin{center}
\includegraphics[width=7cm, height=5cm]{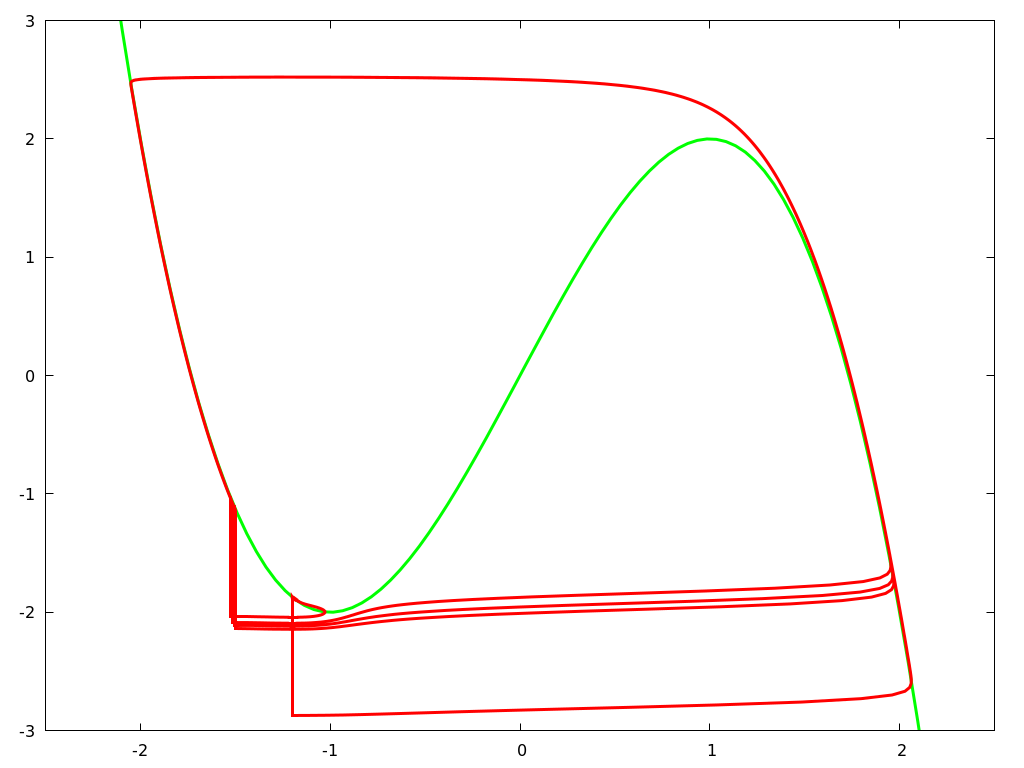}
\caption{Evolution of  $\varphi_1=(u_1,v_1)$ as a function of time. $\alpha=8.41$. Four large and one small oscillations at steady state. 
}
\label{fig:u-v-t-al=8p41}
\end{center}
\end{figure}

\begin{figure}
\begin{center}
\includegraphics[width=7cm, height=5cm]{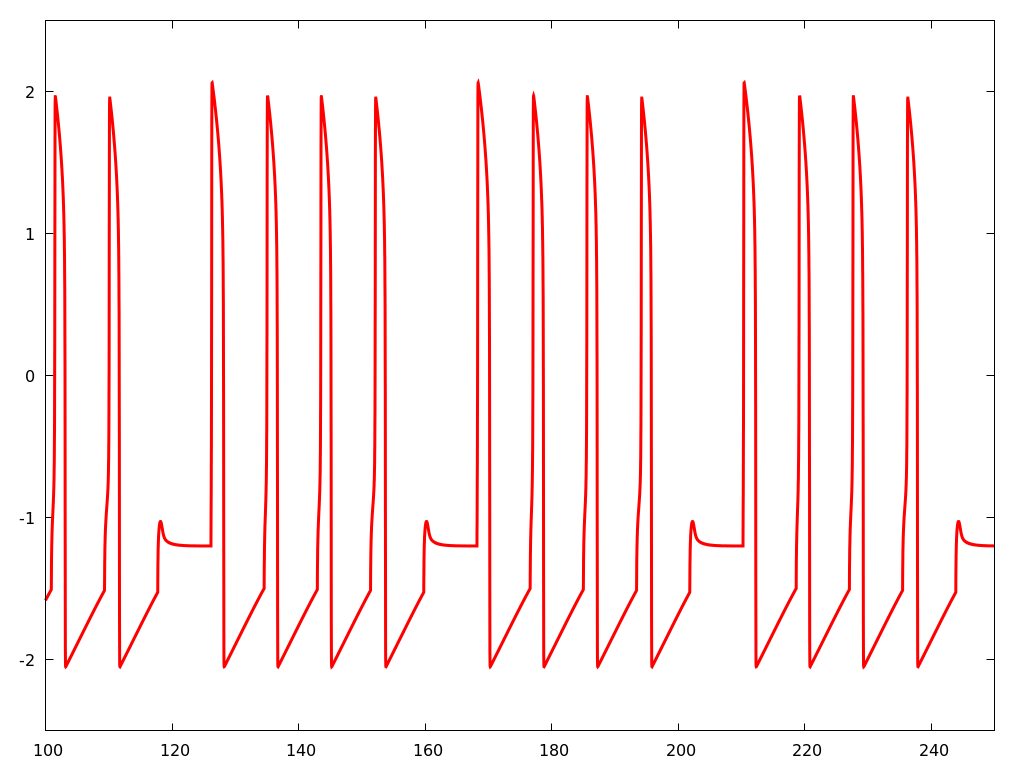}
\caption{Evolution of  $u_1$ as a function of time. $\alpha=8.41$.}
\label{fig:u-t-al=8p41}
\end{center}
\end{figure}

\begin{figure}
\begin{center}
\includegraphics[width=7cm, height=5cm]{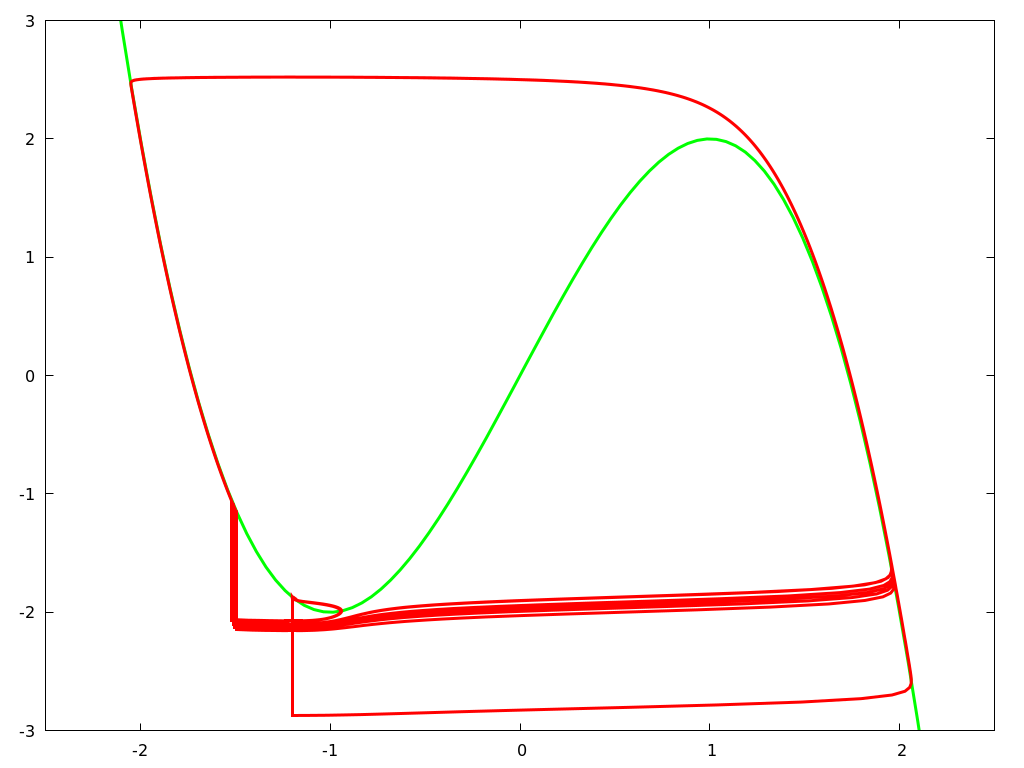}
\caption{Evolution of  $\varphi_1=(u_1,v_1)$ as a function of time. $\alpha=8.45$.  Five large and one small.}
\label{fig:u-v-t-al=8p45}
\end{center}
\end{figure}

\begin{figure}
\begin{center}
\includegraphics[width=7cm, height=5cm]{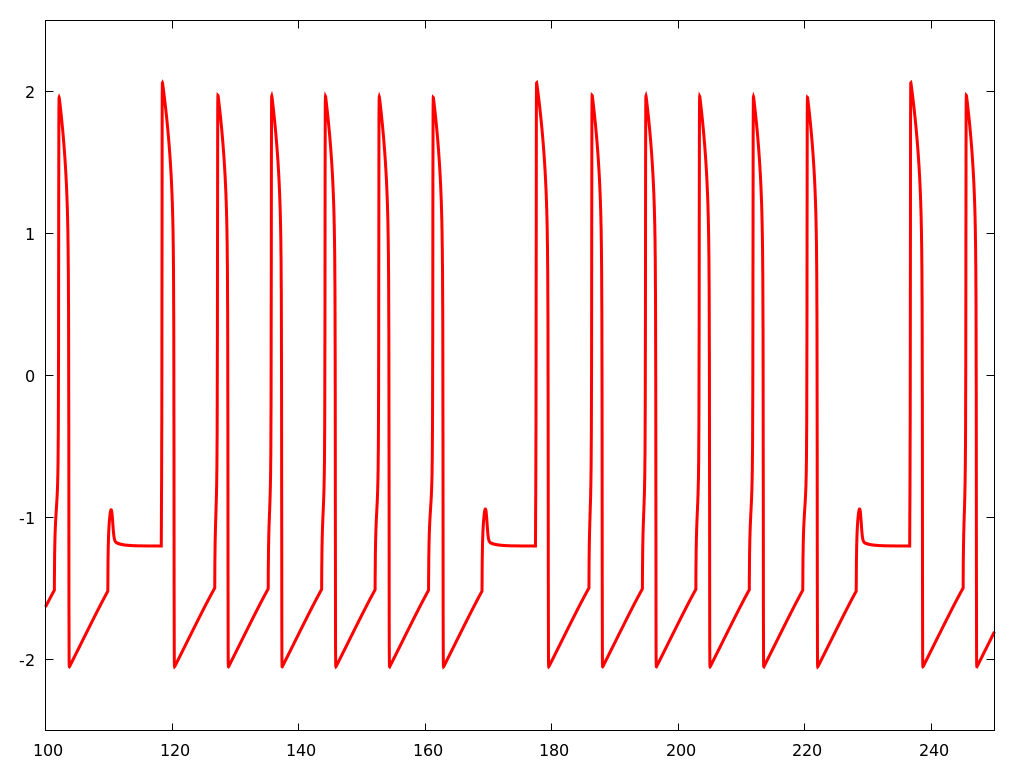}
\caption{Evolution of  $u_1$ as a function of time. $\alpha=8.45$.}
\label{fig:u-t-al=8p45}
\end{center}
\end{figure}

In the interval between $\alpha_1$ and $\alpha_0$, the system can feature longer sequences of regular depolarizations interspersed by occasional quiescent loops. This is an interesting and mathematically complicated parameter regime. We share here some numerical observations together with our proposed geometric mechanism for this behavior.
\begin{rem}
One of the remarkable phenomena occurring in the regime $\alpha_1 \leq \alpha \leq \alpha_0$ is that, after each large-amplitude oscillation, the values of $v$ at which the trajectory for $S_1$ receives a kick form an increasing sequence until attraction to the stable equilibrium causes a small oscillation. This gives rise to an effective periodicity with MMOs, that is, a rhythmic train consisting of sequences of consecutive large-amplitude oscillations interrupted by one small one.
\end{rem}

\noindent {\it Proposed mechanism.}
The reader should refer to \cref{fig:u-v-t-al=8p41,fig:u-v-t-al=8p45,fig:u-v-differentcolors}. With reference especially to the third figure here, there is a region below the critical manifold $v=f(u)$ in which trajectories (these go mostly left-to-right) take progressively longer to go across from the left part of phase space over to the right branch of  the manifold; this tendency agrees with the vertical ordering of these trajectories, with the higher ones taking a longer time. This phenomenon is most drastically observed when trajectories of the system track the critical manifold beyond the critical point at $u=-1$. In particular, given two such trajectories -- one below the other -- the left-to-right crossing time for the higher one is larger than the combination of crossing and ascent times for the lower one, which is to say that if the system were to travel (say, from initial condition $u(0)=-1$) along these trajectories, it would take longer for it to reach the right jump point on the higher trajectory than on the lower one.

\begin{figure}
 \begin{center}
\includegraphics[width=10cm, height=8cm]{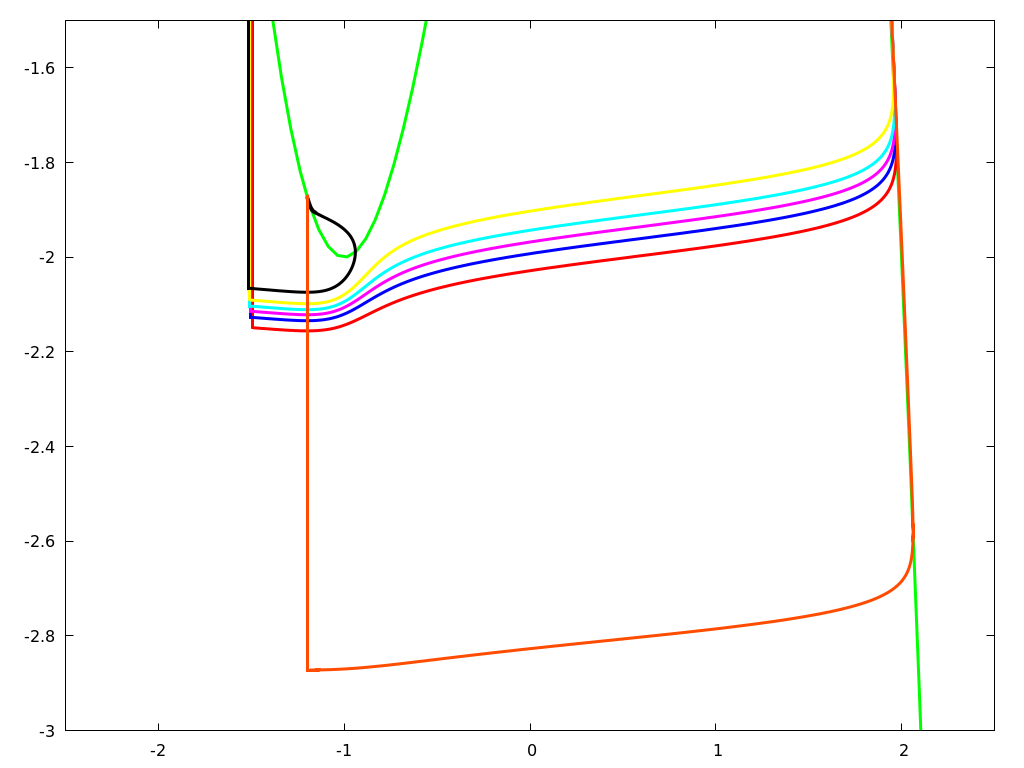}
\caption{A phase plot of a single trajectory of $(u_1,v_1)$ for $\alpha=8.45$, specifically, its successive passes after successive kicks (shown in various colors, in order) progressively closer to the left extremal point of the critical manifold. This is due to a speed dissipation effect that grows with proximity to the minimal point of the graph of~$f$. This figure suggests the following mechanism: after six large oscillations (with depolarization) corresponding to six kicks, at the seventh kick, the trajectory is trapped toward the attractive equilibrium with a small oscillation. The time afterwards needed to reach a small neighborhood of the steady state is much shorter than $\alpha$; consequently, at the next kick the trajectory is not distinguishable from the one following the first kick. Hence, this mechanism gives rise to a 6-large-1-small MMO.}
\label{fig:u-v-differentcolors}
\end{center}
\end{figure}

Now, let us assume that the system is kicked from rest at $(c,f(c))$. After a large oscillation, provided that the next kick arrives when the trajectory is in the downward (slow) swing, the new (post-kick) $v$ coordinate of the trajectory is bounded below (which is to say, above) $v=f(c)-A$. At this point, either the trajectory features a small oscillation with attraction to $(c,f(c))$, or it features another large oscillation (but one whose lower portion is at a level above the first one). It seems then that the MMO phenomenology observed in, e.g., \cref{fig:u-v-t-al=8p45}, is owed to the fact that when not attracted directly to $(c,f(c))$, the trajectory finds itself precisely in the region of phase space previously described. A sequence of large oscillations ensues, each one featuring a left-to-right crossing above the prior, until the the kick arrives sufficiently early to induce a small loop with attraction to $(c,f(c))$. In effect, this last step renews the system and the process starts over.

\bigskip

We couple this observation with a description of the phenomenology at $S_1$ as the forcing period is decreased from $\alpha_0$ to $\alpha_1$: In principle, there can be a very high number of large loops prior to a small resetting oscillation. This number is larger for $\alpha \approx \alpha_0$ and steadily decreases, as $\alpha$ decreases to $\alpha_1$ (see \cref{fig:u-v-al=8p3,fig:u-v-al=8p4,fig:u-v-t-al=8p41,fig:u-v-t-al=8p45}). Notice that, as noted in \cref{thm-two_loops-one_big_one_small}, for $\alpha$ in the interval $(\alpha_2,\alpha_1)$, there is just one large loop paired to a small one.

\begin{rem}
Understanding the extent of this complex phenomenology constitutes an involved mathematical problem related to various prior work and deserves a specially dedicated investigation beyond the scope of our descriptive exposition here.  A few examples of mathematical analysis related to the present problem are provided in the context of  three-time-scale equations, see \cite{Kru-2008,Kru-2008-2,Mae-2014} and also \cite{Amb-2021}. In there, as in here, solution trajectories constrained to follow a canard (and thereby result in relaxation oscillations) will instead diverge from this expected path, thereafter tracking in the  opposite direction. This behavior is captured nicely  by the computable solutions of a 2d-integrable system. 


\end{rem}

\section{Case studies of propagation and filtering of mixed-mode oscillations beyond $S_1$}
\label{sec:propagation_MMO}
\subsection{A simple mechanism for filtration}
\label{subsec:propagation_simple_mixed}
Assume first that $A$ is sufficiently large for \cref{thm-two_loops-one_big_one_small} to apply at $S_1$; furthermore, assume that $\alpha \in (\alpha_2 , \alpha_1)$. The dynamics of $S_1$ is then asymptotically attracted to a simple mixed-mode $2\alpha$-periodic oscillation that features depolarization only once in its period. It is immediate that $S_2$ receives forcing at half the frequency of the original input (the period is doubled to $2\alpha$). Thereafter, if $2\alpha$ happens to exceed $\alpha_0$, the tail of the chain will support the corresponding stable $2\alpha$-periodic TW as opposed to a TW of period $\alpha$. This scenario -- both observed in our numerics and established rigorously -- presents a simple mechanism for the failure of wave propagation, a property observed in a variety of models in neuroscience. We refer to this phenomenology as {\it filtering}, since it presents an effective downshift in spiking frequency from the perspective of signal processing.

It is also worth noting that the presence of higher-complexity MMO for certain forcing periods (e.g., those between $\alpha_1$ and $\alpha_0$) provides a setting where this sort of failure of wave propagation (equivalently, wave mutation) can happen in a much more complicated manner. For instance, if $2\alpha$ is in the doubling interval $(\alpha_2,\alpha_1)$, the dynamics at $S_2$ will be asymptotic to a $4\alpha$-periodic trajectory; in turn, $S_3$ sees a sequence of Dirac kicks with period $4\alpha$. On the other hand, in principle one could have $2\alpha \in (\alpha_1,\alpha_0)$, whereby $S_2$ supports an MMO with some complexity, no longer featuring uniform spacing between consecutive kicks, but instead skipping kicks every once in a while. Of course, some of these possibilities may not be achievable in practice given the model parameters fixed in our numerical investigations.

\begin{figure}
\begin{center}

\includegraphics[width=7cm, height=5cm]{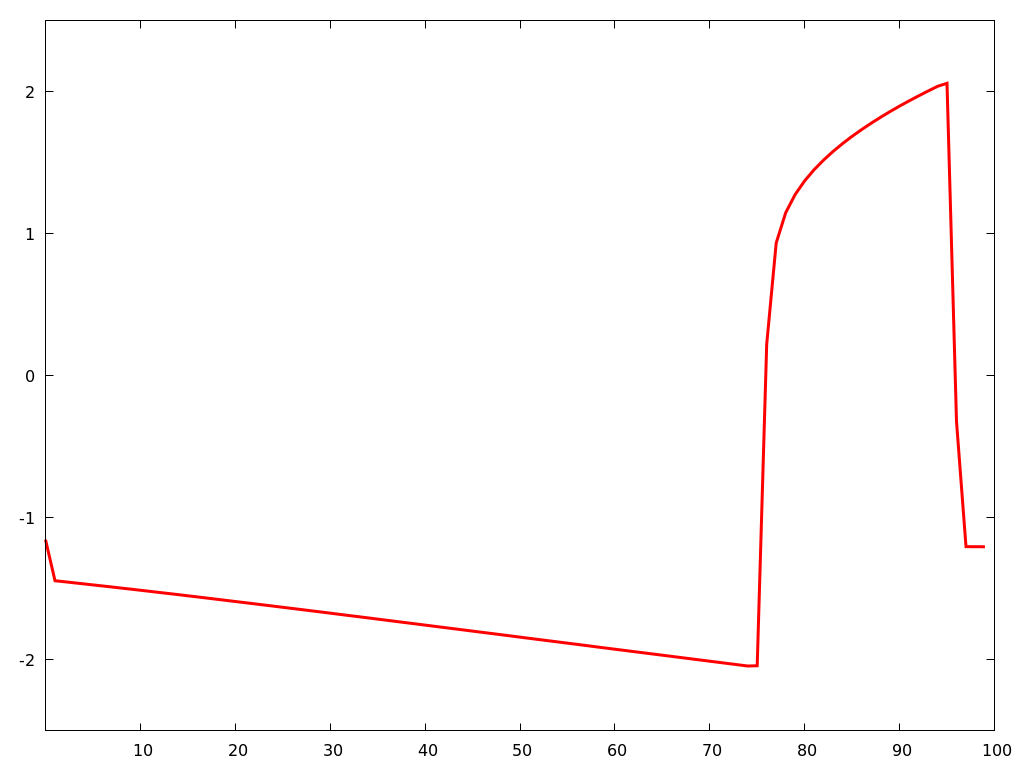}
\caption{Traveling wave of period $2\alpha$ for $\{ S_j \}_{j\geq2}$. Represented by $(u_j)_{1\leq j \leq 100}$ at time $t=100$. $\alpha=8$. Notice the kick between j=1 and 2 which will not generate a traveling wave but rather die at $j=2$ (compare with \cref{fig:u-x-t=155-al=50}); this is due to the fact that the wave profile starts at $j=2$.}
\label{fig:u-x-t=110-al=8}
\end{center}
\end{figure} 

%
%

\begin{figure}
\begin{center}
\includegraphics[width=7cm, height=5cm]{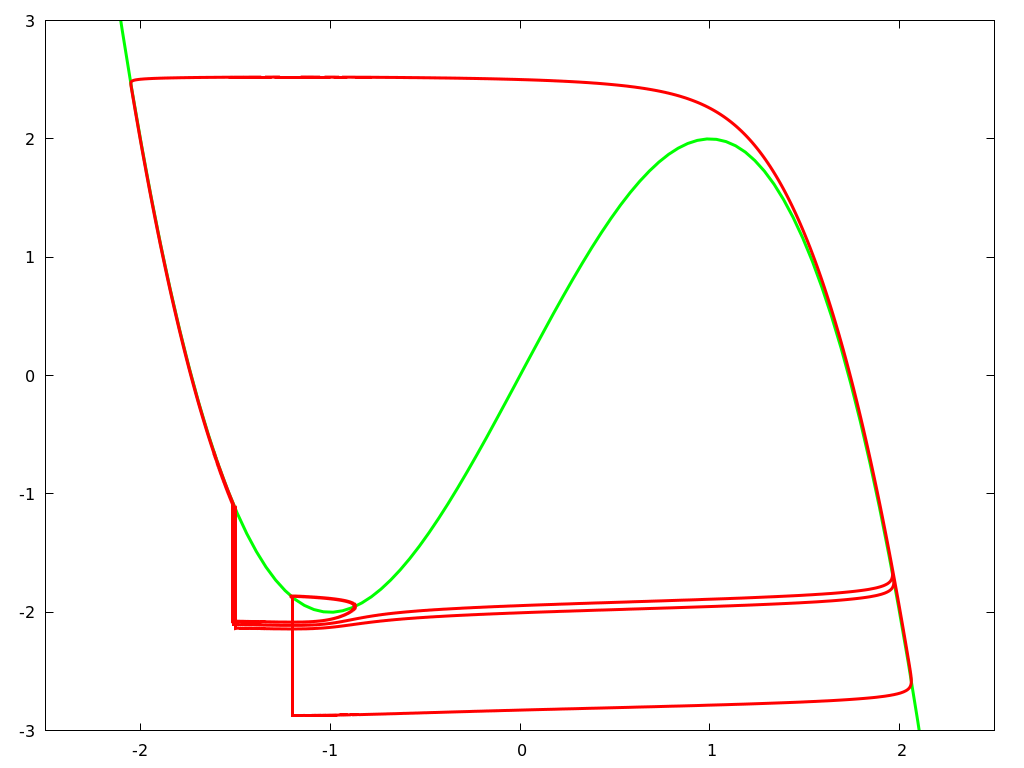}
\caption{Evolution of  $\varphi_2=(u_2,v_2)$ as a function of time. $\alpha=8.41$.}
\label{fig:u1-v1-t-al=8p41}
\end{center}
\end{figure}

\subsection{A few examples indicating even more-complex downstream filtering of mixed-mode oscillations}
\label{subsec:propagation_complex_mixed-numerics}
A cursory juxtaposition of \cref{fig:u-v-t-al=8p41,fig:u1-v1-t-al=8p41} reveals that $S_2$ presents a simpler MMO than $S_1$ at steady-state with $\alpha=8.41 \in (\alpha_1,\alpha_0)$. This is to say that the complexity of MMOs decreases down the chain. A similar observation follows from examination of \cref{fig:u-v-t-al=1p1,fig:u2-v2-t-al=1p1}, albeit in the regime $\alpha<\alpha_2$ which corresponds to the very high forcing frequency setting.

\begin{figure}
\begin{center}
\includegraphics[width=7cm, height=5cm]{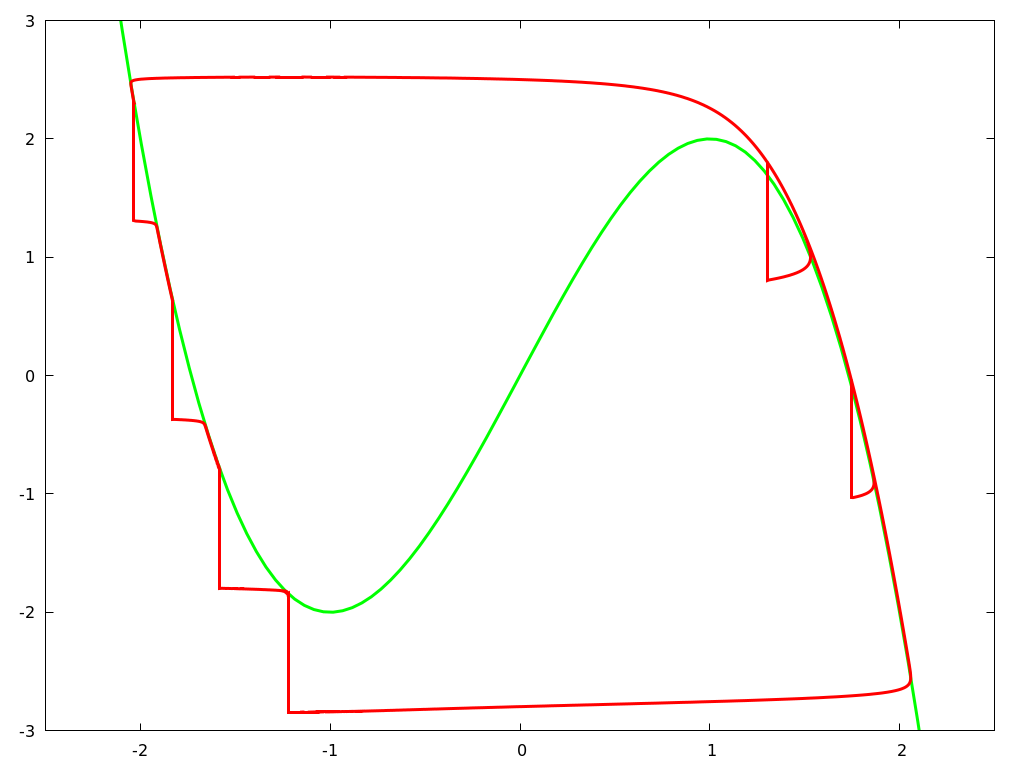}
\caption{Evolution of  $\varphi_1=(u_1,v_1)$ as a function of time. $\alpha=1$.}
\label{fig:u-v-t-al=1}
\end{center}
\end{figure}

\begin{figure}
\begin{center}
\includegraphics[width=7cm, height=5cm]{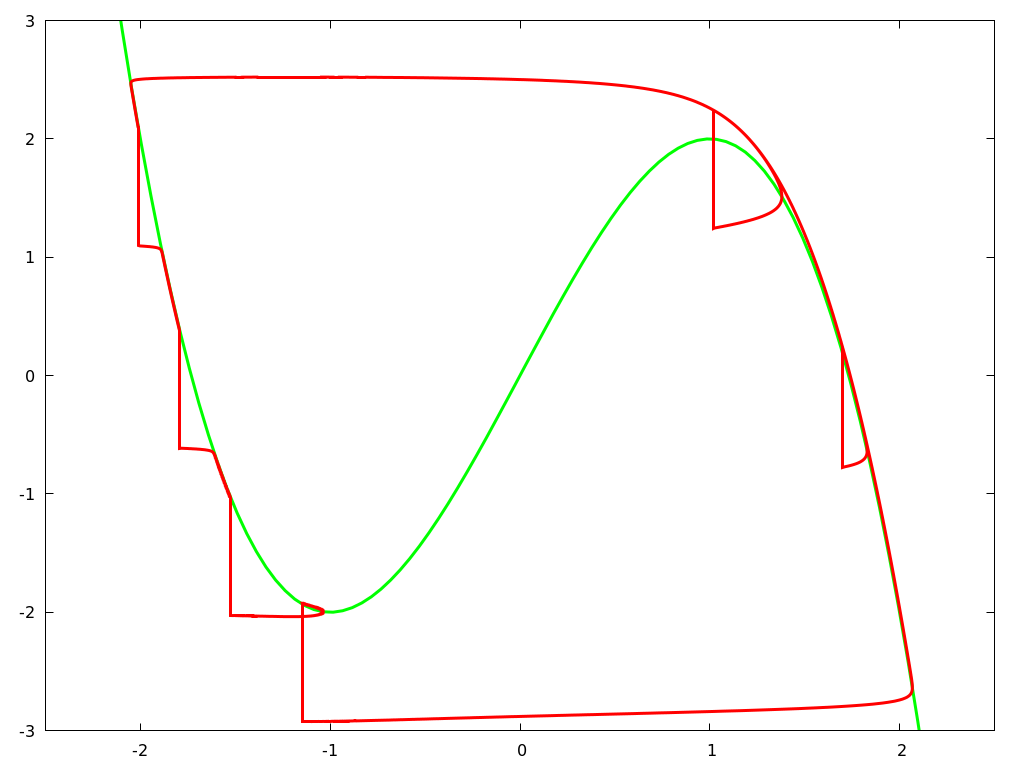}
\caption{Evolution of  $\varphi_1=(u_1,v_1)$ as a function of time. $\alpha=1.1$.}
\label{fig:u-v-t-al=1p1}
\end{center}
\end{figure}

\begin{figure}
\begin{center}
\includegraphics[width=7cm, height=5cm]{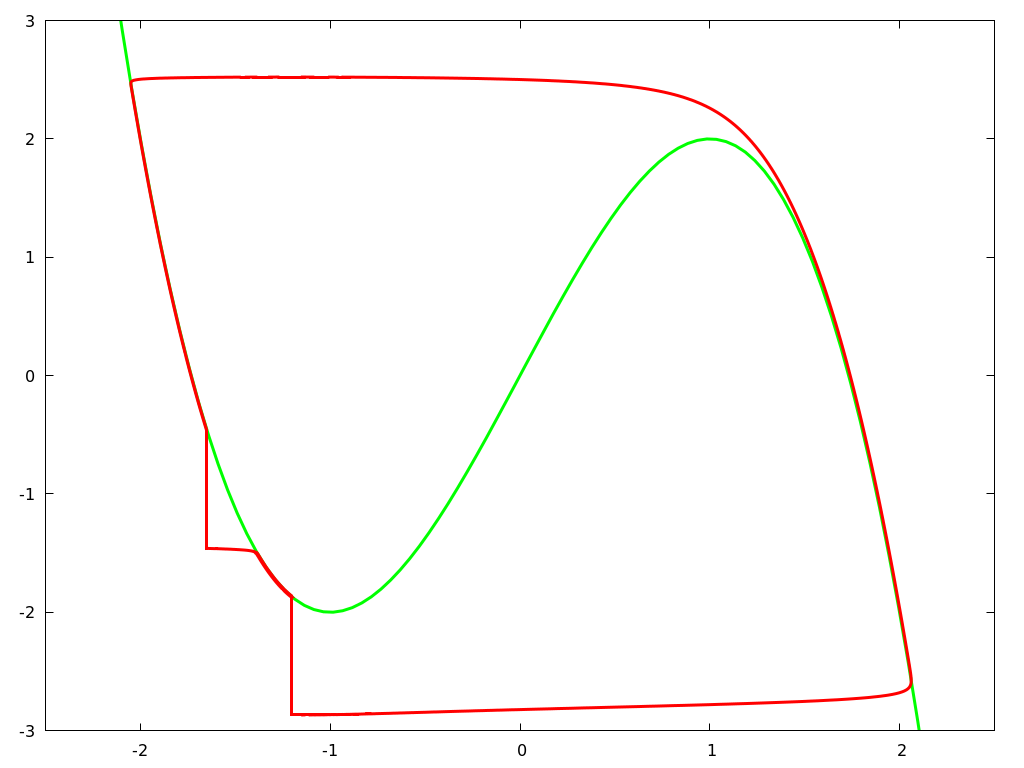}
\caption{Evolution of  $\varphi_2=(u_2,v_2)$ as a function of time. $\alpha=1.1$.}
\label{fig:u2-v2-t-al=1p1}
\end{center}
\end{figure}

The voltage dynamics in time allows another perspective on the filtration properties of the network. \Cref{fig:al=4cells1-2-3} illustrates an $\alpha$-to-$2\alpha$-to-$4\alpha$ doubling cascade through cells $S_1$, $S_2$, and $S_3$ for the instance in which $\alpha < \alpha_2$, followed by $2\alpha \in (\alpha_2,\alpha_1)$ (notice that \cref{thm-two_loops-one_big_one_small} applies here), whereafter $4\alpha > \alpha_0$. Starting with the third cell, the tail of the chain $\{ S_j \}_{j \geq 3}$ will feature a regular simple firing pattern with a $4\alpha$-periodic TW profile.
\begin{figure}
\begin{center}
\makebox[\textwidth]{\hfill
\includegraphics[width=3.5cm]{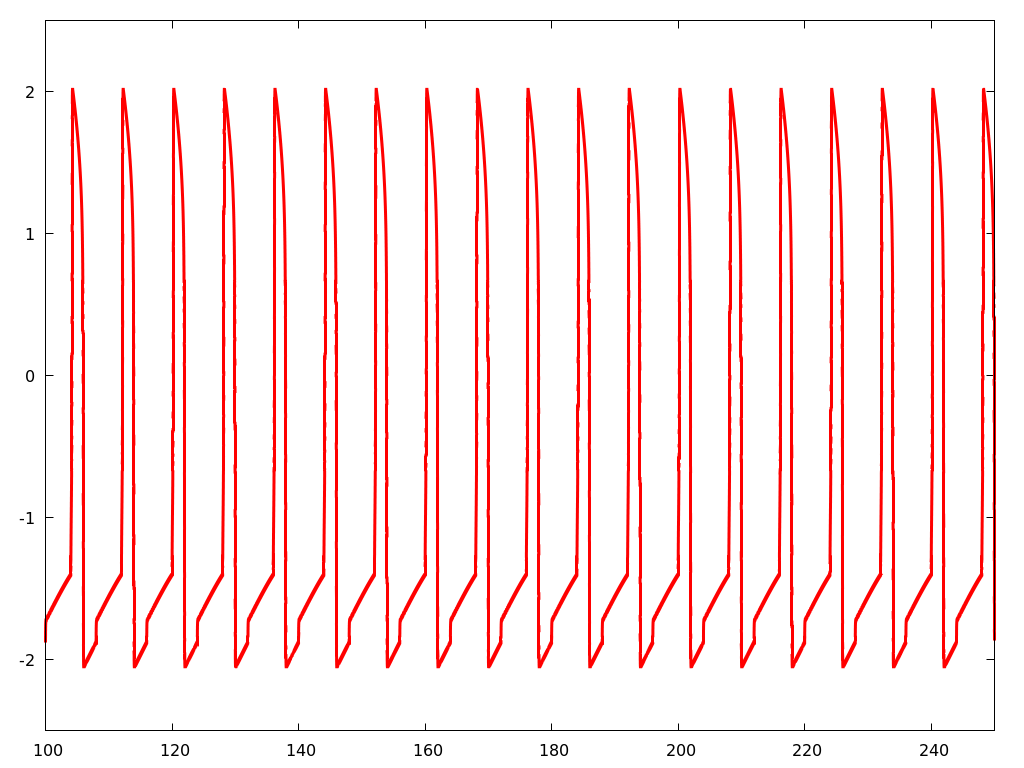}
\hfill
\includegraphics[width=3.5cm]{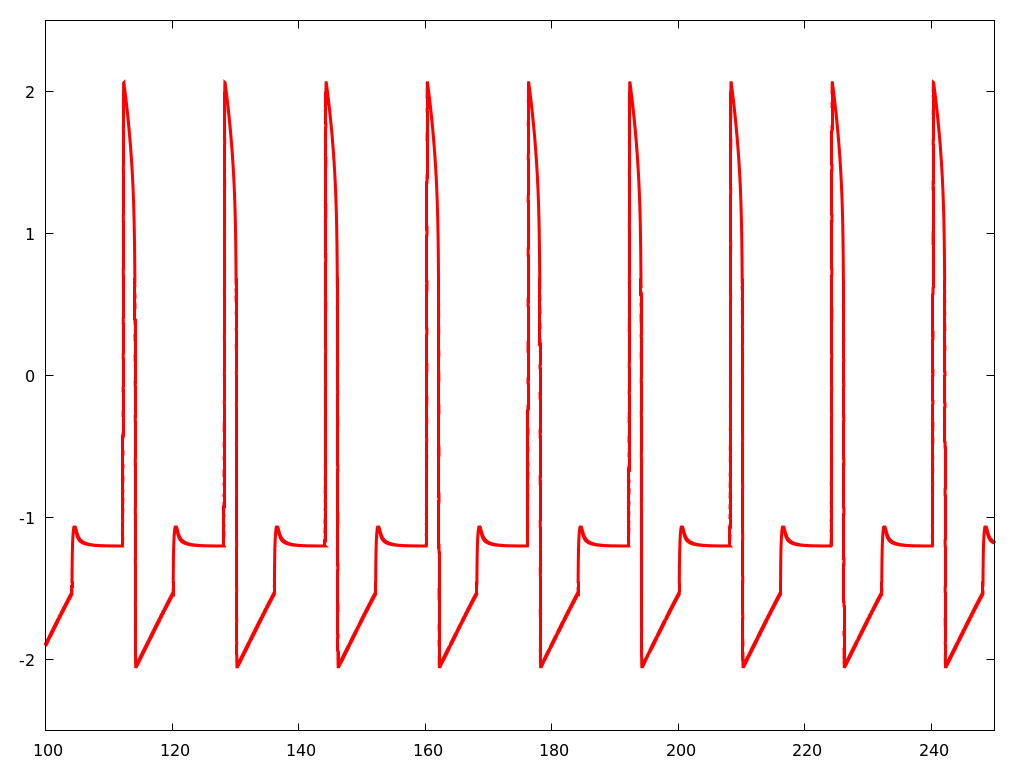}
\hfill
\includegraphics[width=3.5cm]{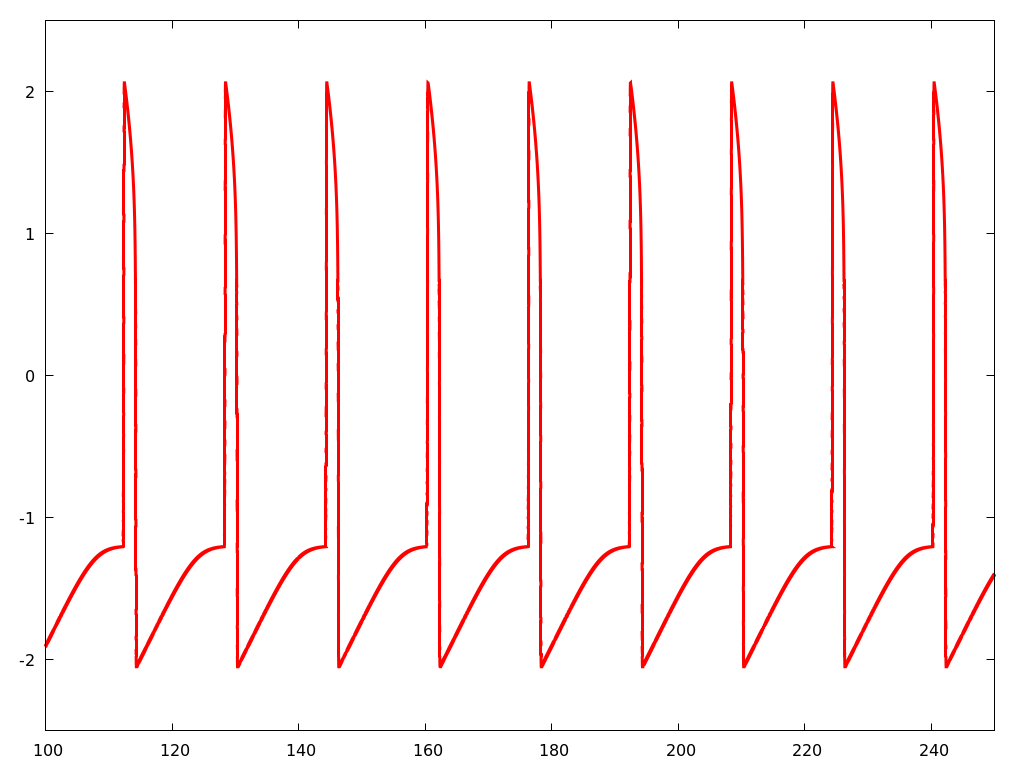}
\hfill
}
\caption{$\alpha=4$. 
\textbf{Left:} Voltage dynamics of $S_1$, simple periodic oscillation with period $2\alpha$ (twice the forcing due to the fact that every second stimulus arrives early, during the slow portion of the trajectory, so the dynamics continues toward the stable equilibrium after a brief rightward fast motion following that kick).
\textbf{Center:} Voltage dynamics of $S_2$, which receives periodic stimuli at period $2\alpha \in (\alpha_2,\alpha_1)$; this induces a simple MMO with one large-amplitude oscillation and one small-amplitude reset. The overall effective period is $4\alpha$ 
\textbf{Right:} Voltage dynamics of $S_3$ shows a simple $4\alpha$-periodic oscillation with one depolarization in each of its periods. \label{fig:al=4cells1-2-3}
}
\end{center}
\end{figure}

On the other hand, higher complexity of the MMOs at an upstream node (the situation where $2^j \alpha$ is finds itself in $(\alpha_1,\alpha_0)$ during the course of the induction) has further interesting consequences. This is illustrated in \cref{fig:al=4p2cells1-2-3}, where the third cell in the chain sees a burst-like behavior with four cycles per period, three of them `on' beats, and one of them a quiescent `off' cycle. Of note, the second cell behaves almost like this, but features a genuine MMO with a small-amplitude oscillation instead of the quiet phase. 
\begin{figure}
\begin{center}
\makebox[\textwidth]{\hfill
\includegraphics[width=3.5cm]{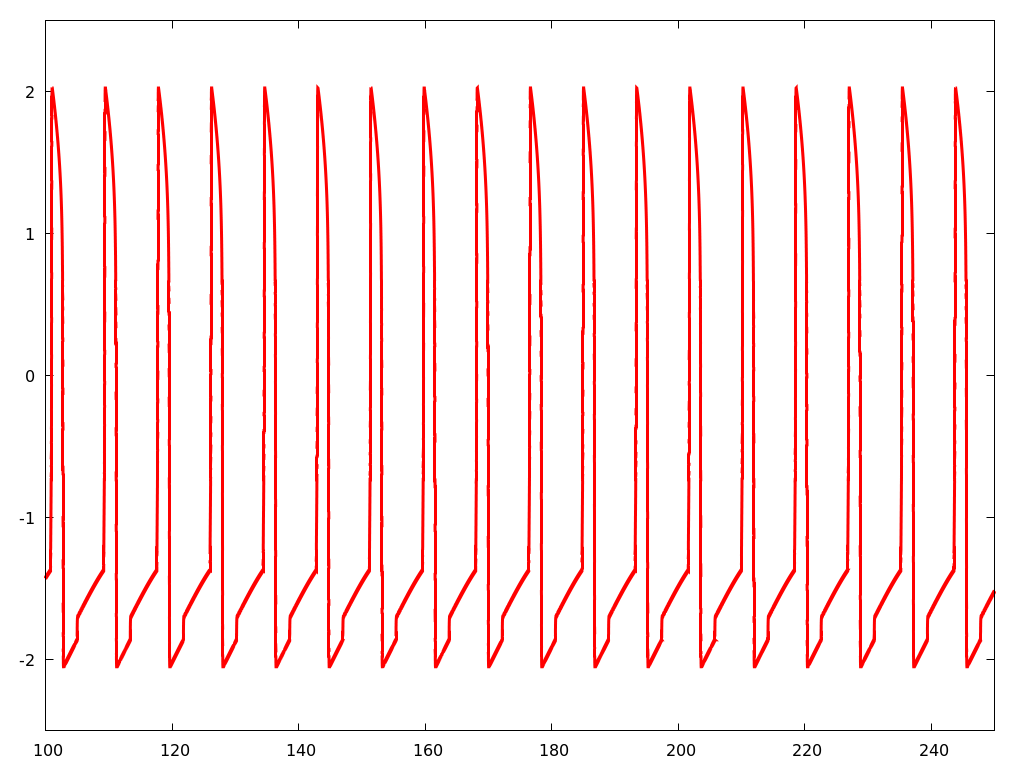}
\hfill
\includegraphics[width=3.5cm]{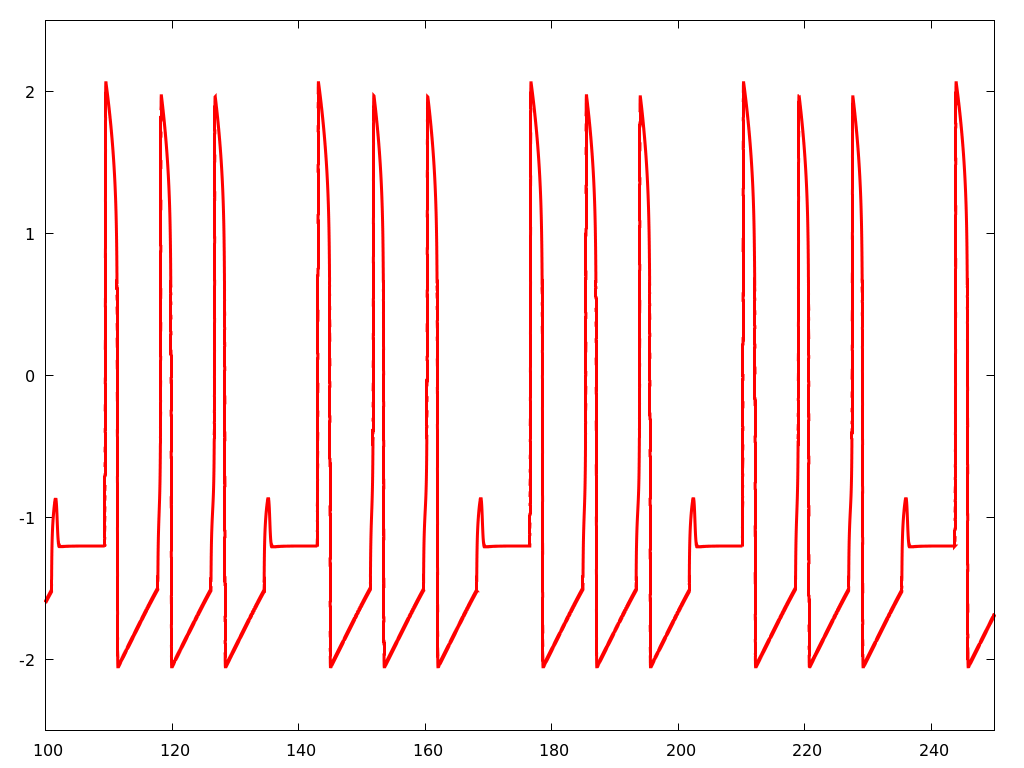}
\hfill
\includegraphics[width=3.5cm]{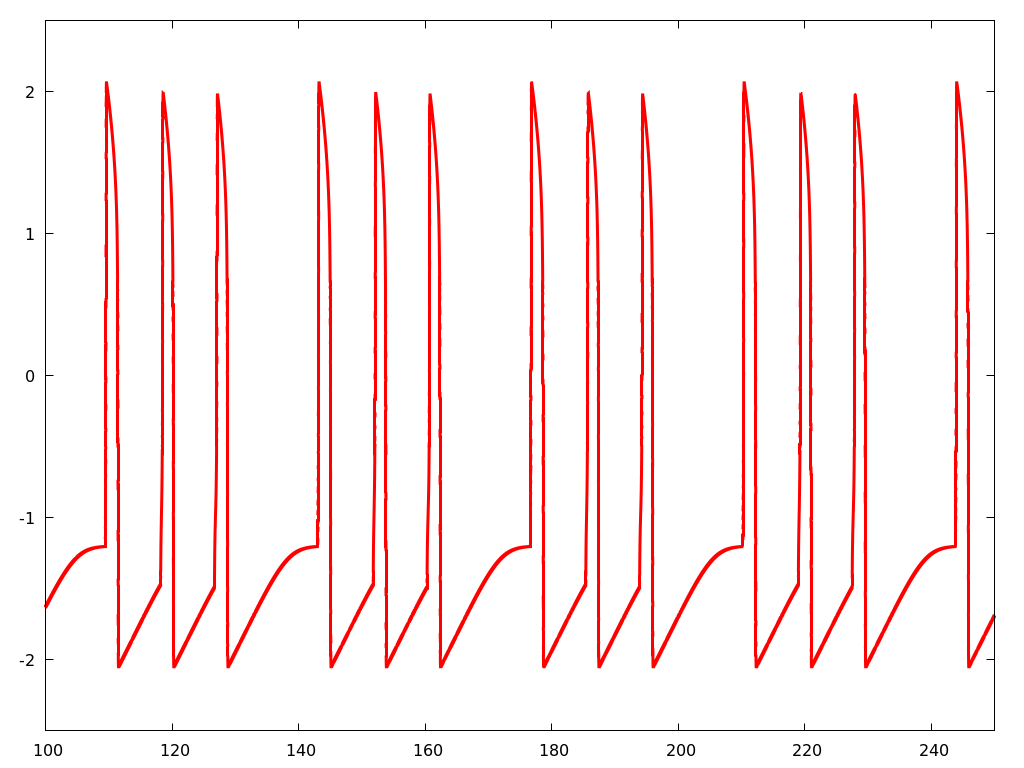}
\hfill
}
\caption{$\alpha=4.2$. 
\textbf{Left:} Voltage dynamics of $S_1$, simple periodic oscillation with period $2\alpha$ (twice the forcing due to the fact that every second stimulus arrives early, during the slow portion of the trajectory, so the dynamics continues toward the stable equilibrium after a brief rightward fast motion following that kick).
\textbf{Center:} Voltage dynamics of $S_2$, which receives periodic stimuli at period $2\alpha \in (\alpha_1,\alpha_0)$; this induces a complex MMO with three large-amplitude oscillations and one small-amplitude reset. The overall effective period is $8\alpha$. The important point of emphasis here is that while the stimulus this cell presents to $S_3$ is periodic, it is {\it not uniform}. (Effectively, there are three consecutive beats and one small flutter phase in each period.)
\textbf{Right:} Voltage dynamics of $S_3$ shows a simpler $8\alpha$-periodic oscillation with three depolarizations in each of its periods, interspersed by a quiescent phase distinct from the mixed-mode behavior in the second cell, in that the voltage ramps up instead of fluttering. This can be understood by noting the decomposition of the presented stimulus, which features three beats at $2\alpha$ together with a fourth flutter that does not produce a stimulus. So instead of making a small recovery (flutter loop), $S_3$ in fact has time to recover directly to a neighborhood of the equilibrium prior to the process renewing. For reference, we have omitted the plot for $S_4$ but it is identical to the one for $S_3$; this suggests that the situation settles as $j$ increases. Notice that the chain dynamics appears to converge to a TW that is periodic but qualitatively distinct from the simple regular TWs discussed in \cref{sec:simple_TW}. The latter TW features voltage dynamics at downstream sites that mix consecutive beats with single breaks in between, so they are not uniform. \label{fig:al=4p2cells1-2-3}
}
\end{center}
\end{figure}

\section{Conclusion / future work}
\label{sec:conclusion}
We have investigated the response of a FitzHugh-Nagumo cell to periodic forcing with instantaneous Dirac impulse, when the latter is incorporated into the recovery variable. Furthermore, we have provided detailed studies on the propagation of such a response in the context of a spatially extended feedforward network of cells communicating via excitatory kicks. The results of our parameter exploration (in decreasing forcing period) point to transitions from simple regular depolarization cascades to various mixed-mode oscillations and their propagation / filtration through the network. This work points to increasing complexity as forcing frequency is increased, in competition with a strong regularization or damping effect as the excitation progresses through the medium.

Of note, worthwhile future directions for this research include the investigation of direct stimulation to the voltage variable (for comparison with work done on \cref{E:benjamin-prior_work}), as well as more detailed exploration of the correspondence between stimulus size, period of forcing, and steady-state values in the parameter regime where the natural equilibrium is a sink of spiral type (which may afford the possibility for mixed-mode oscillations with higher number of small-amplitude voltage resets). Another interesting avenue of exploration would be to formulate a probabilistic analysis of the corresponding model under stochastic forcing with, e.g., a Poisson spike train. The interplay between the randomness of kick arrival times, modulo well defined average frequency, and the slow-fast geometry in the FHN dynamics poses an interesting mathematical challenge; it also brings the model's applicability to the realm of random stimuli, an altogether different area of applications.

Lastly, our studies have revealed a particularly interesting, if somewhat pure, mathematical challenge. The possible complexity of MMOs present in a single FHN cell driven by regular Dirac kicks is evident in the numerical investigations. Our admittedly heuristic explanation for this phenomenon relies on the presence of canard trajectories that veer toward the unstable branch of the critical manifold despite starting out as ``fast'' trajectories. Within this realm, one can attempt to address the question of whether it is possible for the steady state solution to feature an arbitrarily high number of long (threshold-crossing) canard trajectories prior to the small reset; that is, does this number increase without bound under adjustments to the period? or, is it bounded? Conversely, this question speaks to whether the length of time in between small-amplitude breaks can be arbitrarily large.



\bmhead{Acknowledgments}
The research of Benjamin Ambrosio (BA) on Neuroscience inspired modeling is partially funded by  CNRS (IEA00134). More generally BA wants to thank Le Havre Normandie University, LMAH, R\'egion Normandie, ISCN, the Courant Institute of Mathematical Sciences and New York University, the Hudson School of Mathematics, New Jersey, USA, for material and/or financial support  for research in Dynamical Systems, Complex Systems and its applications.

\section*{Declarations}


\begin{itemize}
\item[--] Funding: the research of BA has been partially funded by CNRS (IEA00134), LMAH, R\'egion Normandie  and the Hudson School of Mathematics.
\item[--] Conflict of interest/Competing interests. Authors declare no competing interests.
\item[--] Availability of data and materials: NA
\item[--] Both authors  contributed equally to this article. 
\end{itemize}
\bibliography{references}


\begin{thebibliography}{47}
\ifx \bisbn   \undefined \def \bisbn  #1{ISBN #1}\fi
\ifx \binits  \undefined \def \binits#1{#1}\fi
\ifx \bauthor  \undefined \def \bauthor#1{#1}\fi
\ifx \batitle  \undefined \def \batitle#1{#1}\fi
\ifx \bjtitle  \undefined \def \bjtitle#1{#1}\fi
\ifx \bvolume  \undefined \def \bvolume#1{\textbf{#1}}\fi
\ifx \byear  \undefined \def \byear#1{#1}\fi
\ifx \bissue  \undefined \def \bissue#1{#1}\fi
\ifx \bfpage  \undefined \def \bfpage#1{#1}\fi
\ifx \blpage  \undefined \def \blpage #1{#1}\fi
\ifx \burl  \undefined \def \burl#1{\textsf{#1}}\fi
\ifx \doiurl  \undefined \def \doiurl#1{\url{https://doi.org/#1}}\fi
\ifx \betal  \undefined \def \betal{\textit{et al.}}\fi
\ifx \binstitute  \undefined \def \binstitute#1{#1}\fi
\ifx \binstitutionaled  \undefined \def \binstitutionaled#1{#1}\fi
\ifx \bctitle  \undefined \def \bctitle#1{#1}\fi
\ifx \beditor  \undefined \def \beditor#1{#1}\fi
\ifx \bpublisher  \undefined \def \bpublisher#1{#1}\fi
\ifx \bbtitle  \undefined \def \bbtitle#1{#1}\fi
\ifx \bedition  \undefined \def \bedition#1{#1}\fi
\ifx \bseriesno  \undefined \def \bseriesno#1{#1}\fi
\ifx \blocation  \undefined \def \blocation#1{#1}\fi
\ifx \bsertitle  \undefined \def \bsertitle#1{#1}\fi
\ifx \bsnm \undefined \def \bsnm#1{#1}\fi
\ifx \bsuffix \undefined \def \bsuffix#1{#1}\fi
\ifx \bparticle \undefined \def \bparticle#1{#1}\fi
\ifx \barticle \undefined \def \barticle#1{#1}\fi
\bibcommenthead
\ifx \bconfdate \undefined \def \bconfdate #1{#1}\fi
\ifx \botherref \undefined \def \botherref #1{#1}\fi
\ifx \url \undefined \def \url#1{\textsf{#1}}\fi
\ifx \bchapter \undefined \def \bchapter#1{#1}\fi
\ifx \bbook \undefined \def \bbook#1{#1}\fi
\ifx \bcomment \undefined \def \bcomment#1{#1}\fi
\ifx \oauthor \undefined \def \oauthor#1{#1}\fi
\ifx \citeauthoryear \undefined \def \citeauthoryear#1{#1}\fi
\ifx \endbibitem  \undefined \def \endbibitem {}\fi
\ifx \bconflocation  \undefined \def \bconflocation#1{#1}\fi
\ifx \arxivurl  \undefined \def \arxivurl#1{\textsf{#1}}\fi
\csname PreBibitemsHook\endcsname

\bibitem{Hod-1952}
\begin{barticle}
\bauthor{\bsnm{Hodgkin}, \binits{A.L.}},
\bauthor{\bsnm{Huxley}, \binits{A.F.}}:
\batitle{A quantitative description of membrane current and its application to
  conduction and excitation in nerve}.
\bjtitle{The Journal of Physiology}
\bvolume{117}(\bissue{4}),
\bfpage{500}--\blpage{544}
(\byear{1952}).
\doiurl{10.1113/jphysiol.1952.sp004764}
\end{barticle}
\endbibitem

\bibitem{Fit-1961}
\begin{barticle}
\bauthor{\bsnm{FitzHugh}, \binits{R.}}:
\batitle{Impulses and physiological states in theoretical models of nerve
  membrane}.
\bjtitle{Biophysical Journal}
\bvolume{1}(\bissue{6}),
\bfpage{445}--\blpage{466}
(\byear{1961}).
\doiurl{10.1016/s0006-3495(61)86902-6}
\end{barticle}
\endbibitem

\bibitem{Nag-1962}
\begin{barticle}
\bauthor{\bsnm{Nagumo}, \binits{J.}},
\bauthor{\bsnm{Arimoto}, \binits{S.}},
\bauthor{\bsnm{Yoshizawa}, \binits{S.}}:
\batitle{An active pulse transmission line simulating nerve axon}.
\bjtitle{Proceedings of the {IRE}}
\bvolume{50}(\bissue{10}),
\bfpage{2061}--\blpage{2070}
(\byear{1962}).
\doiurl{10.1109/jrproc.1962.288235}
\end{barticle}
\endbibitem

\bibitem{Fen-2008}
\begin{barticle}
\bauthor{\bsnm{Fenton}, \binits{F.}},
\bauthor{\bsnm{Cherry}, \binits{E.}}:
\batitle{Models of cardiac cell}.
\bjtitle{Scholarpedia}
\bvolume{3}(\bissue{8}),
\bfpage{1868}
(\byear{2008}).
\doiurl{10.4249/scholarpedia.1868}
\end{barticle}
\endbibitem

\bibitem{Has-1975}
\begin{barticle}
\bauthor{\bsnm{Hastings}, \binits{S.P.}}:
\batitle{Some mathematical problems from neurobiology}.
\bjtitle{The American Mathematical Monthly}
\bvolume{82}(\bissue{9}),
\bfpage{881}--\blpage{895}
(\byear{1975}).
\doiurl{10.1080/00029890.1975.11993972}
\end{barticle}
\endbibitem

\bibitem{Has-1976}
\begin{barticle}
\bauthor{\bsnm{Hastings}, \binits{S.P.}}:
\batitle{On the existence of of homoclinic and periodic orbits for the
  fitzhugh-nagumo equations}.
\bjtitle{The Quarterly Journal of Mathematics}
\bvolume{27}(\bissue{1}),
\bfpage{123}--\blpage{134}
(\byear{1976}).
\doiurl{10.1093/qmath/27.1.123}
\end{barticle}
\endbibitem

\bibitem{Mck-1970}
\begin{barticle}
\bauthor{\bsnm{McKean}, \binits{H.P.}}:
\batitle{Nagumo's equation}.
\bjtitle{Advances in Mathematics}
\bvolume{4}(\bissue{3}),
\bfpage{209}--\blpage{223}
(\byear{1970}).
\doiurl{10.1016/0001-8708(70)90023-x}
\end{barticle}
\endbibitem

\bibitem{Jon-1984}
\begin{barticle}
\bauthor{\bsnm{Jones}, \binits{C.K.R.T.}}:
\batitle{Stability of the travelling wave solution of the {FitzHugh}-nagumo
  system}.
\bjtitle{Transactions of the American Mathematical Society}
\bvolume{286}(\bissue{2}),
\bfpage{431}--\blpage{431}
(\byear{1984}).
\doiurl{10.1090/s0002-9947-1984-0760971-6}
\end{barticle}
\endbibitem

\bibitem{Kop-1991}
\begin{bchapter}
\bauthor{\bsnm{Jones}, \binits{C.}},
\bauthor{\bsnm{Kopell}, \binits{N.}},
\bauthor{\bsnm{Langer}, \binits{R.}}:
\bctitle{Construction of the fitzhugh-nagumo pulse using differential forms}.
In: \beditor{\bsnm{Aris}, \binits{R.}},
\beditor{\bsnm{Aronson}, \binits{D.G.}},
\beditor{\bsnm{Swinney}, \binits{H.L.}} (eds.)
\bbtitle{Patterns and Dynamics in Reactive Media},
pp. \bfpage{101}--\blpage{115}.
\bpublisher{Springer},
\blocation{New York, NY}
(\byear{1991})
\end{bchapter}
\endbibitem

\bibitem{Rau-1978}
\begin{barticle}
\bauthor{\bsnm{Rauch}, \binits{J.}},
\bauthor{\bsnm{Smoller}, \binits{J.}}:
\batitle{Qualitative theory of the {FitzHugh}-nagumo equations}.
\bjtitle{Advances in Mathematics}
\bvolume{27}(\bissue{1}),
\bfpage{12}--\blpage{44}
(\byear{1978}).
\doiurl{10.1016/0001-8708(78)90075-0}
\end{barticle}
\endbibitem

\bibitem{Rin-1983}
\begin{barticle}
\bauthor{\bsnm{Rinzel}, \binits{J.}},
\bauthor{\bsnm{Keaner}, \binits{J.P.}}:
\batitle{Hopf bifurcation to repetitive activity in nerve}.
\bjtitle{{SIAM} Journal on Applied Mathematics}
\bvolume{43}(\bissue{4}),
\bfpage{907}--\blpage{922}
(\byear{1983}).
\doiurl{10.1137/0143058}
\end{barticle}
\endbibitem

\bibitem{Amb-2017}
\begin{barticle}
\bauthor{\bsnm{Ambrosio}, \binits{B.}}:
\batitle{Hopf bifurcation in an oscillatory-excitable
  reaction{\textendash}diffusion model with spatial heterogeneity}.
\bjtitle{International Journal of Bifurcation and Chaos}
\bvolume{27}(\bissue{05}),
\bfpage{1750065}
(\byear{2017}).
\doiurl{10.1142/s0218127417500651}
\end{barticle}
\endbibitem

\bibitem{Amb-2009}
\begin{barticle}
\bauthor{\bsnm{Ambrosio}, \binits{B.}},
\bauthor{\bsnm{Francoise}, \binits{J.-P.}}:
\batitle{Propagation of bursting oscillations}.
\bjtitle{Philosophical Transactions of the Royal Society A: Mathematical,
  Physical and Engineering Sciences}
\bvolume{367}(\bissue{1908}),
\bfpage{4863}--\blpage{4875}
(\byear{2009}).
\doiurl{10.1098/rsta.2009.0143}
\end{barticle}
\endbibitem

\bibitem{Maia-2013}
\begin{barticle}
\bauthor{\bsnm{Maia}, \binits{P.D.}},
\bauthor{\bsnm{Kutz}, \binits{J.N.}}:
\batitle{Identifying critical regions for spike propagation in axon segments}.
\bjtitle{Journal of Computational Neuroscience}
\bvolume{36}(\bissue{2}),
\bfpage{141}--\blpage{155}
(\byear{2013}).
\doiurl{10.1007/s10827-013-0459-3}
\end{barticle}
\endbibitem

\bibitem{Chaz-2005}
\begin{bbook}
\bauthor{\bsnm{Jean-René}, \binits{C.}},
\bauthor{\bsnm{Fernandez.}, \binits{B.}}:
\bbtitle{Dynamics of Coupled Map Lattices and of Related Spatially Extended
  Systems}.
\bpublisher{Springer},
\blocation{Berlin New York}
(\byear{2005}).
\doiurl{10.1007/b103930}
\end{bbook}
\endbibitem

\bibitem{Afra-2005}
\begin{bbook}
\bauthor{\bsnm{Afraimovich}, \binits{V.}}:
\bbtitle{Some Topological Properties of Lattice Dynamical Systems},
pp. \bfpage{153}--\blpage{179}.
\bpublisher{Springer},
\blocation{Berlin, Heidelberg}
(\byear{2005}).
\doiurl{10.1007/11360810_7}
\end{bbook}
\endbibitem

\bibitem{Bae-2005}
\begin{bchapter}
\bauthor{\bsnm{Baesens}, \binits{C.}}:
\bctitle{Spatially extended systems with monotone dynamics: continuous time}.
In: \bbtitle{Lecture Notes in Physics},
pp. \bfpage{241}--\blpage{263}.
\bpublisher{Springer},
\blocation{Berlin, Heidelberg}
(\byear{2005})
\end{bchapter}
\endbibitem

\bibitem{Flo-2005}
\begin{bchapter}
\bauthor{\bsnm{Flor{\'{\i}}a}, \binits{L.M.}},
\bauthor{\bsnm{Baesens}, \binits{C.}},
\bauthor{\bsnm{G{\'{o}}mez-Garde{\~{n}}es}, \binits{J.}}:
\bctitle{The {F}renkel{\textendash}{K}ontorova model}.
In: \bbtitle{Lecture Notes in Physics},
pp. \bfpage{209}--\blpage{240}.
\bpublisher{Springer},
\blocation{Berlin, Heidelberg}
(\byear{2005}).
\doiurl{10.1007/11360810_9}
\end{bchapter}
\endbibitem

\bibitem{Erm-2010}
\begin{bbook}
\bauthor{\bsnm{Ermentrout}, \binits{G.B.}},
\bauthor{\bsnm{Terman}, \binits{D.H.}}:
\bbtitle{Mathematical Foundations of Neuroscience}.
\bpublisher{Springer},
\blocation{New York London}
(\byear{2010}).
\doiurl{10.1007/978-0-387-87708-2}.
\burl{https://doi.org/10.1007/978-0-387-87708-2}
\end{bbook}
\endbibitem

\bibitem{Izh-2007}
\begin{bbook}
\bauthor{\bsnm{Izhikevich}, \binits{E.M.}}:
\bbtitle{Dynamical Systems in Neuroscience}.
\bsertitle{Computational neuroscience Dynamical systems in neuroscience}.
\bpublisher{MIT Press},
\blocation{Cambridge, MA}
(\byear{2007}).
\burl{https://books.google.com/books?id=kVjM6DFk-twC}
\end{bbook}
\endbibitem

\bibitem{Char-2016}
\begin{barticle}
\bauthor{\bsnm{Chariker}, \binits{L.}},
\bauthor{\bsnm{Shapley}, \binits{R.}},
\bauthor{\bsnm{Young}, \binits{L.-S.}}:
\batitle{Orientation selectivity from very sparse {LGN} inputs in a
  comprehensive model of macaque v1 cortex}.
\bjtitle{The Journal of Neuroscience}
\bvolume{36}(\bissue{49}),
\bfpage{12368}--\blpage{12384}
(\byear{2016}).
\doiurl{10.1523/jneurosci.2603-16.2016}
\end{barticle}
\endbibitem

\bibitem{Char-2015}
\begin{barticle}
\bauthor{\bsnm{Chariker}, \binits{L.}},
\bauthor{\bsnm{Young}, \binits{L.-S.}}:
\batitle{Emergent spike patterns in neuronal populations}.
\bjtitle{Journal of Computational Neuroscience}
\bvolume{38}(\bissue{1}),
\bfpage{203}--\blpage{220}
(\byear{2014}).
\doiurl{10.1007/s10827-014-0534-4}
\end{barticle}
\endbibitem

\bibitem{Ger-2002}
\begin{bbook}
\bauthor{\bsnm{Gerstner}, \binits{W.}},
\bauthor{\bsnm{Kistler}, \binits{W.M.}}:
\bbtitle{Spiking Neuron Models: Single Neurons, Populations, Plasticity}.
\bpublisher{Cambridge University Press},
\blocation{Cambridge, U.K. New York}
(\byear{2002}).
\burl{https://books.google.com/books?id=Rs4oc7HfxIUC}
\end{bbook}
\endbibitem

\bibitem{Kee-2009}
\begin{bbook}
\bauthor{\bsnm{Keener}, \binits{J.}},
\bauthor{\bsnm{Sneyd}, \binits{J.}}:
\bbtitle{Mathematical Physiology}.
\bpublisher{Springer},
\blocation{New York}
(\byear{2009}).
\doiurl{10.1007/978-0-387-75847-3}
\end{bbook}
\endbibitem

\bibitem{Feni-1979}
\begin{barticle}
\bauthor{\bsnm{Fenichel}, \binits{N.}}:
\batitle{Geometric singular perturbation theory for ordinary differential
  equations}.
\bjtitle{Journal of Differential Equations}
\bvolume{31}(\bissue{1}),
\bfpage{53}--\blpage{98}
(\byear{1979}).
\doiurl{10.1016/0022-0396(79)90152-9}
\end{barticle}
\endbibitem

\bibitem{Jon-1995}
\begin{bchapter}
\bauthor{\bsnm{Jones}, \binits{C.K.R.T.}}:
\bctitle{Geometric singular perturbation theory}.
In: \bbtitle{Dynamical Systems},
pp. \bfpage{44}--\blpage{118}.
\bpublisher{Springer},
\blocation{Berlin, Heidelberg}
(\byear{1995}).
\doiurl{10.1007/bfb0095239}
\end{bchapter}
\endbibitem

\bibitem{Kru-2001}
\begin{barticle}
\bauthor{\bsnm{Krupa}, \binits{M.}},
\bauthor{\bsnm{Szmolyan}, \binits{P.}}:
\batitle{Extending geometric singular perturbation theory to nonhyperbolic
  points---fold and canard points in two dimensions}.
\bjtitle{{SIAM} Journal on Mathematical Analysis}
\bvolume{33}(\bissue{2}),
\bfpage{286}--\blpage{314}
(\byear{2001}).
\doiurl{10.1137/s0036141099360919}
\end{barticle}
\endbibitem

\bibitem{Bro-1991}
\begin{barticle}
\bauthor{\bsnm{Broens}, \binits{M.}},
\bauthor{\bsnm{Bar-Eli}, \binits{K.}}:
\batitle{Canard explosion and excitation in a model of the
  belousov-zhabotinskii reaction}.
\bjtitle{The Journal of Physical Chemistry}
\bvolume{95}(\bissue{22}),
\bfpage{8706}--\blpage{8713}
(\byear{1991}).
\doiurl{10.1021/j100175a053}
\end{barticle}
\endbibitem

\bibitem{Rot-2003}
\begin{barticle}
\bauthor{\bsnm{Rotstein}, \binits{H.G.}},
\bauthor{\bsnm{Kopell}, \binits{N.}},
\bauthor{\bsnm{Zhabotinsky}, \binits{A.M.}},
\bauthor{\bsnm{Epstein}, \binits{I.R.}}:
\batitle{Canard phenomenon and localization of oscillations in the
  belousov{\textendash}zhabotinsky reaction with global feedback}.
\bjtitle{The Journal of Chemical Physics}
\bvolume{119}(\bissue{17}),
\bfpage{8824}--\blpage{8832}
(\byear{2003}).
\doiurl{10.1063/1.1614752}
\end{barticle}
\endbibitem

\bibitem{Rot-2008}
\begin{barticle}
\bauthor{\bsnm{Rotstein}, \binits{H.G.}},
\bauthor{\bsnm{Wechselberger}, \binits{M.}},
\bauthor{\bsnm{Kopell}, \binits{N.}}:
\batitle{Canard induced mixed-mode oscillations in a medial entorhinal cortex
  layer {II} stellate cell model}.
\bjtitle{{SIAM} Journal on Applied Dynamical Systems}
\bvolume{7}(\bissue{4}),
\bfpage{1582}--\blpage{1611}
(\byear{2008}).
\doiurl{10.1137/070699093}
\end{barticle}
\endbibitem

\bibitem{Gha-2017}
\begin{barticle}
\bauthor{\bsnm{V-Ghaffari}, \binits{B.}},
\bauthor{\bsnm{Kouhnavard}, \binits{M.}},
\bauthor{\bsnm{Elbasiouny}, \binits{S.M.}}:
\batitle{Mixed-mode oscillations in pyramidal neurons under antiepileptic drug
  conditions}.
\bjtitle{{PLOS} {ONE}}
\bvolume{12}(\bissue{6}),
\bfpage{0178244}
(\byear{2017}).
\doiurl{10.1371/journal.pone.0178244}
\end{barticle}
\endbibitem

\bibitem{Des-2012}
\begin{barticle}
\bauthor{\bsnm{Desroches}, \binits{M.}},
\bauthor{\bsnm{Guckenheimer}, \binits{J.}},
\bauthor{\bsnm{Krauskopf}, \binits{B.}},
\bauthor{\bsnm{Kuehn}, \binits{C.}},
\bauthor{\bsnm{Osinga}, \binits{H.M.}},
\bauthor{\bsnm{Wechselberger}, \binits{M.}}:
\batitle{Mixed-mode oscillations with multiple time scales}.
\bjtitle{{SIAM} Review}
\bvolume{54}(\bissue{2}),
\bfpage{211}--\blpage{288}
(\byear{2012}).
\doiurl{10.1137/100791233}
\end{barticle}
\endbibitem

\bibitem{Wec-2007}
\begin{barticle}
\bauthor{\bsnm{Wechselberger}, \binits{M.}}:
\batitle{Canards}.
\bjtitle{Scholarpedia}
\bvolume{2}(\bissue{4}),
\bfpage{1356}
(\byear{2007}).
\doiurl{10.4249/scholarpedia.1356}
\end{barticle}
\endbibitem

\bibitem{Ben-1981}
\begin{barticle}
\bauthor{\bsnm{Beno\^it}, \binits{E.}},
\bauthor{\bsnm{Callot}, \binits{J.F.}},
\bauthor{\bsnm{Diener}, \binits{F.}},
\bauthor{\bsnm{Diener}, \binits{M.}}:
\batitle{Chasse au canard}.
\bjtitle{Collectanea Mathematica}
\bvolume{32},
\bfpage{37}--\blpage{119}
(\byear{1981})
\end{barticle}
\endbibitem

\bibitem{Amb-2018}
\begin{barticle}
\bauthor{\bsnm{Ambrosio}, \binits{B.}},
\bauthor{\bsnm{Aziz-Alaoui}, \binits{M.A.}},
\bauthor{\bsnm{Yafia}, \binits{R.}}:
\batitle{Canard phenomenon in a slow-fast modified leslie{\textendash}gower
  model}.
\bjtitle{Mathematical Biosciences}
\bvolume{295},
\bfpage{48}--\blpage{54}
(\byear{2018}).
\doiurl{10.1016/j.mbs.2017.11.003}
\end{barticle}
\endbibitem

\bibitem{Dum-1996}
\begin{botherref}
\oauthor{\bsnm{Dumortier}, \binits{F.}},
\oauthor{\bsnm{Roussarie}, \binits{R.}}:
Canard cycles and center manifolds.
Memoirs of the American Mathematical Society
\textbf{121}(577)
(1996).
\doiurl{10.1090/memo/0577}
\end{botherref}
\endbibitem

\bibitem{Eck-1983}
\begin{bchapter}
\bauthor{\bsnm{Eckhaus}, \binits{W.}}:
\bctitle{Relaxation oscillations including a standard chase on french ducks}.
In: \bbtitle{Lecture Notes in Mathematics},
pp. \bfpage{449}--\blpage{497}.
\bpublisher{Springer},
\blocation{Berlin, Heidelberg}
(\byear{1983}).
\doiurl{10.1007/bfb0062381}.
\burl{https://doi.org/10.1007/bfb0062381}
\end{bchapter}
\endbibitem

\bibitem{Bro-2006}
\begin{botherref}
\oauthor{\bsnm{Br{\o}ns}, \binits{M.}},
\oauthor{\bsnm{Krupa}, \binits{M.}},
\oauthor{\bsnm{Wechselberger}, \binits{M.}}:
Mixed mode oscillations due to the generalized Canard phenomenon.
American Mathematical Society
(2006).
\doiurl{10.1090/fic/049/03}.
\url{https://doi.org/10.1090/fic/049/03}
\end{botherref}
\endbibitem

\bibitem{Kru-2014}
\begin{barticle}
\bauthor{\bsnm{Krupa}, \binits{M.}},
\bauthor{\bsnm{Ambrosio}, \binits{B.}},
\bauthor{\bsnm{Aziz-Alaoui}, \binits{M.A.}}:
\batitle{Weakly coupled two-slow{\textendash}two-fast systems, folded
  singularities and mixed mode oscillations}.
\bjtitle{Nonlinearity}
\bvolume{27}(\bissue{7}),
\bfpage{1555}--\blpage{1574}
(\byear{2014}).
\doiurl{10.1088/0951-7715/27/7/1555}
\end{barticle}
\endbibitem

\bibitem{Fer-2016}
\begin{botherref}
\oauthor{\bsnm{Fernandez}, \binits{B.}},
\oauthor{\bsnm{Mintchev}, \binits{S.M.}}:
Wave generation in unidirectional chains of idealized neural oscillators.
The Journal of Mathematical Neuroscience
\textbf{6}(1)
(2016).
\doiurl{10.1186/s13408-016-0037-x}
\end{botherref}
\endbibitem

\bibitem{Lan-2014}
\begin{barticle}
\bauthor{\bsnm{Lanford}, \binits{O.E.}},
\bauthor{\bsnm{Mintchev}, \binits{S.M.}}:
\batitle{Stability of a family of travelling wave solutions in a feedforward
  chain of phase oscillators}.
\bjtitle{Nonlinearity}
\bvolume{28}(\bissue{1}),
\bfpage{237}--\blpage{261}
(\byear{2014}).
\doiurl{10.1088/0951-7715/28/1/237}
\end{barticle}
\endbibitem

\bibitem{Per-2001}
\begin{bbook}
\bauthor{\bsnm{Perko}, \binits{L.}}:
\bbtitle{Differential Equations and Dynamical Systems}.
\bpublisher{Springer},
\blocation{New York}
(\byear{2001}).
\doiurl{10.1007/978-1-4613-0003-8}
\end{bbook}
\endbibitem

\bibitem{And-1971}
\begin{bbook}
\bauthor{\bsnm{Andronov}, \binits{A.A.}}:
\bbtitle{Theory of Bifurcations of Dynamic Systems on a Plane}.
\bpublisher{Wiley},
\blocation{New York}
(\byear{1973})
\end{bbook}
\endbibitem

\bibitem{Kru-2008}
\begin{botherref}
\oauthor{\bsnm{Krupa}, \binits{M.}},
\oauthor{\bsnm{Popovi{\'{c}}}, \binits{N.}},
\oauthor{\bsnm{Kopell}, \binits{N.}}:
Mixed-mode oscillations in three time-scale systems: A prototypical example
\textbf{7}(2),
361--420
(2008).
\doiurl{10.1137/070688912}
\end{botherref}
\endbibitem

\bibitem{Kru-2008-2}
\begin{botherref}
\oauthor{\bsnm{Krupa}, \binits{M.}},
\oauthor{\bsnm{Popovi{\'{c}}}, \binits{N.}},
\oauthor{\bsnm{Kopell}, \binits{N.}},
\oauthor{\bsnm{Rotstein}, \binits{H.G.}}:
Mixed-mode oscillations in a three time-scale model for the dopaminergic neuron
\textbf{18}(1),
015106
(2008).
\doiurl{10.1063/1.2779859}
\end{botherref}
\endbibitem

\bibitem{Mae-2014}
\begin{botherref}
\oauthor{\bsnm{Maesschalck}, \binits{P.D.}},
\oauthor{\bsnm{Kutafina}, \binits{E.}},
\oauthor{\bsnm{Popovi{\'{c}}}, \binits{N.}}:
Three time-scales in an extended bonhoeffer{\textendash}van der pol oscillator
\textbf{26}(4),
955--987
(2014).
\doiurl{10.1007/s10884-014-9356-3}
\end{botherref}
\endbibitem

\bibitem{Amb-2021}
\begin{botherref}
\oauthor{\bsnm{Ambrosio}, \binits{B.}},
\oauthor{\bsnm{Aziz-Alaoui}, \binits{M.A.}},
\oauthor{\bsnm{Mondal}, \binits{A.}},
\oauthor{\bsnm{Mondal}, \binits{A.}},
\oauthor{\bsnm{Sanjeev}},
\oauthor{\bsnm{Sharma}, \binits{K.}},
\oauthor{\bsnm{Upadhyay}, \binits{R.K.}}:
Non trivial dynamics dynamics in the fizhugh-rinzel model and in
  non-homogeneous oscillatory-excitable reaction-diffusion systems: a few
  highlights.
In preparation
\end{botherref}
\endbibitem

\end{thebibliography}
\end{document}